\title{A $W^{2, \, p}$-estimate for nearly umbilical hypersurfaces }
\author{Stefano Gioffrè}
\theoremstyle{reference}
\newtheorem{teo}{Theorem}[section]
\newtheorem{prop}[teo]{Proposition}
\newtheorem{cor}[teo]{Corollary}
\newtheorem{lemma}[teo]{Lemma}
\numberwithin{equation}{section}
\newcommand{\erre}{\mathbb{R}}
\newcommand{\esse}{\mathbb{S}}
\newcommand{\B}{\mathbb{B}}
\newcommand{\D}{\mathbb{D}}
\newcommand{\daA}[2]{\colon #1 \longrightarrow #2}
\newcommand{\restr}[2]{\left. #1 \right|_{#2}}
\newcommand{\dde}[1]{\frac{\partial}{\partial #1}}
\newcommand{\ddt}{\frac{d}{dt}}
\newcommand{\approxle}{\lesssim}
\newcommand{\Adot}{\mathring{A}}
\newcommand{\Gammag}{_{g}\Gamma}
\newcommand{\nablag}{_{g}\nabla}
\newcommand{\gu}{\mathfrak{u}}
\newcommand{\gf}{\mathfrak{f}}
\newcommand{\gh}{\mathfrak{h}}
\newcommand{\gv}{\mathfrak{v}}
\newcommand{\gw}{\mathfrak{w}}
\renewcommand{\epsilon}{\varepsilon}
\renewcommand{\phi}{\varphi}
\renewcommand{\theta}{\vartheta}
\renewcommand{\ni}{\nu}
\DeclarePairedDelimiter{\abs}{|}{|}
\DeclarePairedDelimiter{\coup}{(}{)}				
\DeclarePairedDelimiter{\cquad}{[ }{]}
\DeclarePairedDelimiter{\norm}{\lVert}{\rVert}
\DeclareMathOperator{\id}{id}
\DeclareMathOperator{\Ric}{Ric}
\DeclareMathOperator{\tr}{tr}
\DeclareMathOperator{\divv}{div}
\DeclareMathOperator{\Vol}{Vol}
\DeclareMathOperator{\hd}{HD}
\DeclareMathOperator{\grad}{grad}
\DeclareMathOperator{\osc}{osc}
\date{}
\begin{document}
\begin{abstract}
Let $n \ge 2$, $p$ in $ (1, \, +\infty)$ be given and let $\Sigma$ be a $n$-dimensional, closed hypersurface in $\erre^{n+1}$. Denote by $A$ its second fundamental form, and by $\Adot$ the tensor $A - \frac{\tr_g A}{n} \, g$ where $g := \restr{\delta}{\Sigma}$ and $\delta$ is the flat metric in $\erre^{n+1}$. Assuming that $\Sigma$ is  the boundary of a convex, open set we generalize the results of \cite{DLRigidity} and we prove that if the $L^p$ norm of $\Adot$ is small, then  $\Sigma$ must be $W^{2, \, p}$-close to a round sphere, with a quantitative estimate. 
\end{abstract}
\maketitle
\section{Introduction}
Let $\Sigma$ be a $n$-dimensional hypersurface in $\erre^{n+1}$. We say that a point in $q \in \Sigma$ is umbilical if its second fundamental form $A$ is diagonal when evaluated at $q$. A classical theorem in differential geometry assures that if $\Sigma$ is connected and every point $p$ is umbilical, then $\Sigma$ is a (portion of a) sphere or a (portion of a) plane. In particular if the hypersurface $\Sigma$ is closed then it must be a round sphere and it  must satisfy $A = \lambda \, g$, where $\lambda$ is a real number and $g = \restr{\delta}{\Sigma}$ is the induced metric. There have been many attempts to give a quantitative version of the rigidity theorems, especially for hypersurfaces which are boundaries of convex sets . For example in \cite{Pogorelov} it is proven that if a $2$-dimensional surface in $\erre^3$ is the boundary of a convex set and satisfies certain conditions on the ratio of the eigenvalues of the second fundamental form, then it must be close to a round sphere. More recently in \cite{DLRigidity} the authors have proven the existence of a universal  constant $C$ such that for every closed surface  $\Sigma$ in $\erre^3$ the following estimate holds:
\[
\min_{\lambda \in \erre} \norm{A - \lambda g}_{L^2_g(\Sigma)} \le C \norm{\Adot}_{L^2_g(\Sigma)}
\]
where $\Adot$ is the traceless second fundamental form. In the same work they also proved that if $\norm{\Adot}_p$ is smaller than a universal constant then there exist a conformal parametrization $\psi \daA{\esse^n}{\Sigma}$ and a vector $c= c(\Sigma)$ such that
\[
\norm{\psi  - \id - c}_{W^{2, \, 2}_\sigma(\esse^2)} \le C \norm{\Adot}_{L^2_g(\Sigma)}
\]
where $C$ is again a universal  constant. In this article we prove a stronger version of this estimate.  Our theorem works for every dimension $n \ge 2$ and for every $p \in (1, \, +\infty)$, although within the assumption that our surface is the boundary of a convex set. The proof of the theorem in \cite{DLRigidity} however is limited in dimension $2$, so in order to prove such result in every dimension, we need to find other ways. We state our result. Here:
\begin{center}
\begin{tabular}{ll}
$\Vol_n$ & $n$-dimensional Hausdorff measure.\\
$\esse^n$ & standard sphere in $\erre^{n+1}$. \\
$\sigma$ & standard metric on the sphere. \\
$g$ & restriction of the $\erre^{n+1}$-flat metric to $\Sigma$.   \\
$A$ & second fundamental form for $\Sigma$.  \\
$\Adot$ & traceless second fundamental for $\Sigma$. \\
$x \approxle_{\alpha} y$ & $x \le C y$ where $C$ is a positive constant depending only on  $\alpha$.   
\end{tabular}
\end{center}
We will also say that a hypersurface $\Sigma$ is $\delta$-\textit{admissible} if it satisfies the following conditions:
\begin{align}
\Sigma &= \partial U, \, \mbox{ where } U \mbox{ is an open, convex set} \\ 
\Vol_n(\esse^n) &= \Vol_n(\esse^n) \\
\norm{\Adot}_{L^p_g(\Sigma)} &\le \delta
\end{align}
\begin{teo}\label{Main}
Let $n \ge 2$ and $p \in (1, \, +\infty)$ be given, and let $\Sigma$ be a smooth, closed $n$-dimensional hypersurface. There exists $\delta=\delta(n, \, p)>0$ with the following property. If $\Sigma$ is $\delta$-admissible there exist a smooth parametrization $\psi \daA{\esse^n}{\Sigma}$ and a vector $c=c(\Sigma) \in U$ such that for the following estimate holds:
\begin{equation}\label{MainFormula}
\norm{\psi - \id - c}_{W^{2, \, p}_\sigma(\esse^n)} \approxle_{n, \, p} \norm{\Adot}_{L^p_g(\Sigma)}
\end{equation}
\end{teo}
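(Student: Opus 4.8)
The plan is to parametrize $\Sigma$ radially over $\esse^n$ and to reduce \eqref{MainFormula} to one quantitative elliptic estimate for the traceless Hessian on the round sphere, treating the nonlinear dependence of $\Adot$ on the parametrization as a perturbation. First I would use the convexity of $U$ to put $\Sigma$ in a convenient position and write $\Sigma=\{u(\omega)\,\omega:\omega\in\esse^n\}$ for a smooth positive $u$, and then express $\Adot$ as a quasilinear second-order differential operator in $u$. The key algebraic observation is that the linearization of $u\mapsto\Adot[u]$ at $u\equiv 1$ is, up to a dimensional constant and the natural identification of tensors on $\Sigma$ with tensors on $\esse^n$, the traceless Hessian $v\mapsto\nabla^2_\sigma v-\tfrac1n(\Delta_\sigma v)\,\sigma$, whose kernel is exactly $\mathcal K:=\erre\oplus\{\text{restrictions to }\esse^n\text{ of the linear functions on }\erre^{n+1}\}$, i.e.\ the linearized family of round spheres and their translates. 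Hence, setting $v:=u-1$ and $f:=\Adot[u]$ (so $\norm{f}_{L^p_g}\le\delta$), one obtains an equation $\nabla^2_\sigma v-\tfrac1n(\Delta_\sigma v)\,\sigma=-c_n f+N[v]$, where $N[v]$ collects the genuinely nonlinear terms -- schematically a finite sum of expressions of type $v\,\nabla^2 v$, $\nabla v\otimes\nabla v$, $v^2$ and $v^2\,\nabla^2 v$ -- so that $\norm{N[v]}_{L^p}\approxle\norm{v}_{C^1}\norm{v}_{W^{2,p}}+\norm{v}_{C^1}^2$.

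Before any perturbation can close, I need an a priori closeness statement: there is a modulus $\eta(\delta)\to0$ as $\delta\to0$ such that every $\delta$-admissible $\Sigma$ is, after a translation, $C^{1,\alpha}$-close to $\esse^n$ with $\norm{u-1}_{C^{1,\alpha}(\esse^n)}\le\eta(\delta)$. This is the non-perturbative heart of the argument and the place where convexity is essential. The route I have in mind is a compactness argument: the area normalization together with the smallness of $\norm{\Adot}_{L^p_g}$ prevents $U$ from degenerating, so a sequence violating the claim subconverges in Hausdorff distance, by Blaschke selection, to a convex body; the vanishing of $\norm{\Adot}_{L^p_g}$ in the limit forces this body to be umbilical in an appropriate weak sense, hence a Euclidean ball, while the volume normalization fixes its radius to $1$; finally, a smooth convex hypersurface that is $C^0$-close to a round sphere is automatically $C^{1,\alpha}$-close to it, because its radial (equivalently, support) function is semiconvex with a controlled semiconvexity bound, so that the $C^0$ estimate self-improves, with a square-root rate, to a $C^1$ estimate. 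An alternative for the first step is to quote the stability of the Minkowski and Aleksandrov--Fenchel inequalities for convex bodies.

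With the a priori bound in hand I would use the linear estimate $\norm{v-\Pi v}_{W^{2,p}(\esse^n)}\approxle_{n,p}\norm{\nabla^2_\sigma v-\tfrac1n(\Delta_\sigma v)\,\sigma}_{L^p(\esse^n)}$, where $\Pi$ is the $L^2$-orthogonal projection onto $\mathcal K$; on the sphere this follows by decomposing $v$ into spherical harmonics, which diagonalizes $(\nabla^2_\sigma-\tfrac1n\Delta_\sigma\,\sigma)^*(\nabla^2_\sigma-\tfrac1n\Delta_\sigma\,\sigma)$ and shows that its kernel consists exactly of the harmonics of degrees $0$ and $1$ (giving the $L^2$ bound), combined with Calder\'on--Zygmund estimates -- or a multiplier theorem on $\esse^n$ -- for the passage to a general $p\in(1,+\infty)$. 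Applying this to $v=u-1$ and inserting the first paragraph yields $\norm{v-\Pi v}_{W^{2,p}}\approxle_{n,p}\norm{f}_{L^p}+\eta(\delta)\,\norm{v}_{W^{2,p}}$. For the finite-dimensional component $\Pi v$, the constant part of $v$ is controlled quadratically, $\abs{\overline v}\approxle\norm{v}_{W^{2,p}}^2+\eta(\delta)\,\norm{v}_{W^{2,p}}$, by expanding the volume constraint to second order, while the linear part is made to vanish by choosing the translation center $c\in U$ to be a fixed point of the map sending a candidate center to the linear part of the radial function of $\Sigma$ about it; this map is a contraction near the round sphere, its differential there being $\approx-\id$, and the resulting $c$ lies in $U$. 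Absorbing the term $\eta(\delta)\,\norm{v}_{W^{2,p}}$ -- legitimate once $\delta=\delta(n,p)$ is small enough -- gives $\norm{u-1}_{W^{2,p}_\sigma(\esse^n)}\approxle_{n,p}\norm{\Adot}_{L^p_g(\Sigma)}$. Finally $\psi(\omega):=c+u(\omega)\,\omega$ parametrizes the original $\Sigma$ and satisfies $\psi-\id-c=(u-1)\,\omega$, so that $\norm{\psi-\id-c}_{W^{2,p}_\sigma(\esse^n)}\approxle_{n,p}\norm{\Adot}_{L^p_g(\Sigma)}$, which is \eqref{MainFormula}.

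The step I expect to be the main obstacle is the a priori closeness of the second paragraph: quantitative umbilicity genuinely fails without convexity, so this cannot be produced by the elliptic perturbation and must be obtained by separate geometric means (Hausdorff compactness together with a rigidity or stability input). A subsidiary difficulty, relevant precisely in the range of $p$ close to $1$ where $W^{2,p}(\esse^n)$ does not embed in $C^1$, is that the $L^p$ control of $N[v]$ really uses the $C^1$ a priori bound and not merely the $W^{2,p}$-smallness of $v$; and one must verify that the kernel-projection contributions in the third paragraph are quadratically small in the data, and hence absorbable, rather than merely small.
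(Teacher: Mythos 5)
Your outline is correct and its skeleton coincides with the paper's: radial parametrization over $\esse^n$, an a priori $C^1$-closeness obtained from convexity plus a compactness/stability input, linearization of the second fundamental form, a sphere elliptic estimate whose kernel is exactly the degree $\le 1$ spherical harmonics, the volume normalization to kill the constant mode, a recentering to kill the linear mode, and absorption. The genuine divergence is in the middle step. The paper does \emph{not} linearize the equation $\Adot[u]=f$ directly; it first proves a fully nonlinear estimate $\min_\lambda\norm{A-\lambda g}_{L^p_g}\approxle\norm{\Adot}_{L^p_g}$ via the Codazzi identity $\nabla H=\tfrac{1}{n-1}\divv_\sigma\Adot+\tfrac{n}{n-1}\Adot[\nabla f]$ together with local divergence-form $L^p$ estimates patched over the sphere, and only then linearizes the resulting inequality to reach $\norm{\Delta f+nf}_{L^p}\approxle\norm{\Adot}_{L^p}+\sqrt{\epsilon}\norm{f}_{W^{2,p}}$, identifying the kernel via Obata. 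Your route instead takes $\divv$ out of the picture and uses the overdetermined ellipticity of $v\mapsto\nabla^2_\sigma v-\tfrac1n(\Delta_\sigma v)\sigma$ directly (spherical harmonics for $p=2$, Calder\'on--Zygmund for general $p$). The two are two faces of the same identity — on $\esse^n$ one has $\divv_\sigma\bigl(\nabla^2 v-\tfrac1n\Delta v\,\sigma\bigr)=\tfrac{n-1}{n}\nabla(\Delta v+nv)$, which is precisely the linearization of the Codazzi relation — so your version trades the paper's Lemma \ref{NewLemma} and Lemma \ref{AnArmLemma} for a standard closed-range estimate for an injectively elliptic operator; arguably cleaner, but it forfeits the paper's intermediate nonlinear Proposition \ref{EstimateFirstOrder}, which has independent interest. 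Your fixed-point recentering replaces the paper's degree-theoretic argument; both rest on the same computation $\Phi(c)\approx-(n+1)c$, and the contraction version needs Lipschitz (not just continuous) dependence of $f_c$ on $c$, which is harmless.

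Two points need repair. First, the a priori closeness is the one step that is not perturbative, and your sketch of it (Blaschke selection, "umbilical in a weak sense in the limit", non-degeneration of $U$) is where all the real work hides: surface area alone does not bound the diameter of a convex body, so ruling out degeneration already uses the smallness of $\norm{\Adot}_{L^p_g}$ in a quantitative way. The paper outsources exactly this to an external result (Proposition \ref{Near}, quoted from the literature) and then performs the $C^0\to C^1$ self-improvement via the convexity inequality $\nabla^2 f\le\sigma+\nabla f\otimes\nabla f$ — the same square-root mechanism you describe — so your plan is viable but this step cannot be left at the level of a sketch. Second, your bound $\abs{\overline v}\approxle\norm{v}_{W^{2,p}}^2+\eta(\delta)\norm{v}_{W^{2,p}}$ for the constant mode is not absorbable as written: $\norm{v}_{W^{2,p}}^2$ cannot be hidden in $\norm{v}_{W^{2,p}}$ without an a priori smallness of the $W^{2,p}$ norm itself, which you do not have (and for $p$ near $1$ cannot get from the $C^1$ bound). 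The fix is the one the paper uses: the quadratic terms in the expansion of the volume constraint involve only $v$ and $\nabla v$, so one factor can be estimated in $L^\infty$ by $\eta(\delta)$ and the other in $L^p$, giving $\abs{\overline v}\approxle\eta(\delta)\norm{v}_{W^{1,p}}$, which is absorbable.
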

From theorem \ref{Main} we infer the following corollary, which improves a result proved in \cite{DLRigidity}.
\begin{cor}
Under the assumptions of theorem \ref{Main} the following estimate holds:
\begin{equation}
\norm{g - \sigma}_{W^{1, \, p}_\sigma(\esse^n)} \approxle_{n, \, p} \norm{\Adot}_{L^p_g(\Sigma)}
\end{equation}
\end{cor}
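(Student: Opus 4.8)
The plan is to derive the Corollary directly from Theorem~\ref{Main} by manipulating the first fundamental form. I read $g$ throughout as the pullback $\psi^{*}g$ on $\esse^{n}$, where $\psi\daA{\esse^{n}}{\Sigma}$ is the parametrization furnished by Theorem~\ref{Main}; in local coordinates $g_{ij}=\langle\partial_{i}\psi,\partial_{j}\psi\rangle$ and $\sigma_{ij}=\langle\partial_{i}\id,\partial_{j}\id\rangle$, with $\langle\cdot,\cdot\rangle$ the Euclidean product of $\erre^{n+1}$. Writing $u:=\psi-\id-c$, Theorem~\ref{Main} gives $\norm{u}_{W^{2,\,p}_{\sigma}(\esse^{n})}\approxle_{n,\,p}\norm{\Adot}_{L^{p}_{g}(\Sigma)}=:\eta$, a quantity that can be made as small as we please by shrinking $\delta$. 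Since $c$ is constant, $\partial_{i}\psi=\partial_{i}\id+\partial_{i}u$, and expanding the inner product produces the pointwise identity
\[
(g-\sigma)_{ij}=\langle\partial_{i}\id,\partial_{j}u\rangle+\langle\partial_{j}\id,\partial_{i}u\rangle+\langle\partial_{i}u,\partial_{j}u\rangle,
\]
after which it remains to bound the $W^{1,\,p}_{\sigma}(\esse^{n})$ norm of each of the three terms on the right by a constant depending on $n,p$ times $\eta$.

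The linear terms are harmless: a further covariant differentiation produces only terms in which $\id$ and $u$ appear together with their first and second covariant derivatives, and since $\id$ is a fixed smooth embedding, $\id,\nabla\id,\nabla^{2}\id$ are bounded by constants depending only on $n$ (for instance $\nabla^{2}\id$ is given by the Gauss formula for the round sphere). Hence the first two terms are, together with their gradients, pointwise $\approxle_{n}\abs{\nabla u}+\abs{\nabla^{2}u}$, so their $W^{1,\,p}_{\sigma}$ norm is $\approxle_{n}\norm{u}_{W^{2,\,p}_{\sigma}(\esse^{n})}\approxle_{n,\,p}\eta$.

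The real work is the quadratic term, which I denote $Q_{ij}:=\langle\partial_{i}u,\partial_{j}u\rangle$. By the product rule and Hölder's inequality one gets $\norm{Q}_{W^{1,\,p}_{\sigma}(\esse^{n})}\approxle\norm{\nabla u}_{L^{\infty}(\esse^{n})}\norm{u}_{W^{2,\,p}_{\sigma}(\esse^{n})}$, so everything reduces to an $L^{\infty}$ bound on $\nabla u$. If $2p>n$ the Morrey--Sobolev embedding $W^{2,\,p}_{\sigma}(\esse^{n})\hookrightarrow C^{1}(\esse^{n})$ yields $\norm{\nabla u}_{L^{\infty}}\approxle_{n,\,p}\norm{u}_{W^{2,\,p}_{\sigma}}\approxle_{n,\,p}\eta$, whence this term is $\approxle_{n,\,p}\eta^{2}\le\eta$. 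If $2p\le n$ the embedding into $C^{1}$ is unavailable; here I would use the auxiliary a priori estimate $\norm{\psi-\id-c}_{C^{1}(\esse^{n})}\to0$ as $\delta\to0$, which is natural in the analysis proving Theorem~\ref{Main} and can in any case be obtained from $\delta$-admissibility alone by a Blaschke-selection/compactness argument exploiting convexity of $U$ together with the curvature bounds forced by smallness of $\norm{\Adot}$ and the volume normalization. This gives $\norm{\nabla u}_{L^{\infty}}\approxle1$ and hence the quadratic term is again $\approxle_{n,\,p}\eta$. Summing the three estimates gives $\norm{g-\sigma}_{W^{1,\,p}_{\sigma}(\esse^{n})}\approxle_{n,\,p}\norm{\Adot}_{L^{p}_{g}(\Sigma)}$.

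I expect the only delicate point to be precisely the nonlinear term in the regime $2p\le n$: there $W^{2,\,p}$ does not embed into $C^{1}$, so the $L^{\infty}$ control of $\nabla u$ cannot be read off from the $W^{2,\,p}$ estimate and must be supplied separately, which is where convexity re-enters. Everything else is a routine application of the product rule, Hölder's inequality, and the bounded geometry of the standard sphere.
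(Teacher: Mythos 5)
The paper states this corollary without supplying a proof, so there is nothing to compare against line by line; judged on its own, your argument is correct and is essentially the natural deduction from Theorem \ref{Main}. The decomposition $g_{ij}-\sigma_{ij}=\coup{\partial_i\id,\partial_j u}+\coup{\partial_j\id,\partial_i u}+\coup{\partial_i u,\partial_j u}$ with $u=\psi-\id-c$, the bounded geometry of $\id\daA{\esse^n}{\erre^{n+1}}$ for the linear terms, and the bound $\norm{Q}_{W^{1,p}_\sigma}\approxle\norm{\nabla u}_\infty\norm{u}_{W^{2,p}_\sigma}$ for the quadratic term are all fine. An equivalent route, closer to the paper's machinery, is to use formula \eqref{g} directly: $g-\sigma=(e^{2f}-1)\sigma+e^{2f}\,\nabla f\otimes\nabla f$, and then invoke $\norm{f}_{W^{2,p}_\sigma}\approxle_{n,p}\norm{\Adot}_{L^p_g}$ (established in the proof of Theorem \ref{Main}) together with the $W^{1,\infty}$-smallness; both roads lead to the same place.

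Two points deserve correction or simplification. First, your Sobolev threshold is off: $W^{2,p}_\sigma(\esse^n)\hookrightarrow C^1(\esse^n)$ requires $p>n$, not $2p>n$ (the latter only gives continuity of $u$, not of $\nabla u$). Second, and more importantly, the "auxiliary a priori estimate" you propose to supply by a Blaschke-selection argument is already in the paper: Proposition \ref{ApproxTwo} gives $\norm{f}_\infty\le\epsilon$ and $\norm{\nabla f}_\infty\le 2\sqrt{\epsilon}$, and since $d\psi[z]=e^{f}\coup{z+\nabla_z f\,x}$ by \eqref{Differential}, this yields $\norm{\nabla u}_{L^\infty}\approxle\sqrt{\epsilon}\le 1$ for every $p\in(1,\infty)$. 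So you should simply quote Proposition \ref{ApproxTwo} in all regimes of $p$ rather than splitting cases; with that substitution the proof is complete and correct.
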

The proof of the theorem is essentially divided into three main parts.
Firstly we show that under certain assumptions we can find a constant $\lambda$ so that is it possible control the $L^p$-norm of $A - \lambda g$ with the $L^p$-norm of $\Adot$. This estimate is basically the nonlinear version of our result.
Secondly we prove that a convex surface whose the $L^p$-norm of $\Adot$ is small is $W^{1, \, \infty}$-near to a sphere with qualitative estimates.
Thirdly we make quantitative the results obtained in the previous part, proving a certain estimate which is very close to \eqref{MainFormula}. As we will see, this estimate depends on the position of $\Sigma$ in $\erre^{n+1}$. From these three results we prove our theorem by centering $\Sigma$ properly. 
\subsection*{Notation}
Throughout this paper we will use the previous notational conventions plus the following ones:
\begin{center}
\begin{tabular}{ll}
$n$ & Integer $ \ge 2$. \\
$p$ & real number in $(1, \, +\infty)$. \\
$\B^\sigma(x)$ & geodesic ball in  $\esse^n$ centred in $x$, of radius $r$. \\
$\delta$  & standard metric in $\erre^n$. \\
$\Sigma$ & closed, $n$-dimensional hypersurface in $\erre^{n+1}$. \\
$D$ & usual derivative in $\erre^n$. \\
$\nabla$ & Levi-Civita connection associated to $\sigma$. \\
$\grad_\sigma$ & Gradient of a function defined on $\esse^n$ taken w.r.t. $\sigma$. \\
$\Delta$ & Laplace-Beltrami operator acting on the sphere. \\
$\osc(f)$ &  oscillation of $f$, $\osc(f)= \sup f - \inf f$  \\
$\Gamma(E)$ & space of smooth sections of a vector bundle $E \rightarrow M$ \\
$\faktor{\mu}{\nu}$ & density of the measure $\mu$ w.r.t. the measure $\nu$
\end{tabular}
\end{center}

\section{Proof of the main theorem}
Before entering in the details, we exhibit the parametrization on which we will work. Let us assume for a moment that $\Sigma$ is the border of an open, convex set $U$ containing $0$. We can give the following radial parametrization for $\Sigma$:
\begin{equation}\label{PsiDef}
\psi \daA{\esse^n}{\Sigma },\ \psi(x) := \rho(x) \, x := e^{f(x)} \, x
\end{equation}
Clearly $\psi$ is a smooth diffeomorphism. If $U$ does not contain $0$ we can still give such parametrization by properly translating $U$. We will say that $\Sigma$ is \textit{radially parametrized} if it can be written as the image of such $\psi$.

Now we can state the three main steps outlined in the introduction and show how these propositions easily lead to the theorem.
\begin{prop}\label{EstimateFirstOrder}
Let $\Sigma$ be a radially parametrized manifold in $\erre^{n+1}$. Then the following estimate holds:
\begin{equation}\label{EstimateFirstOrderEq}
\min_{\lambda \in \erre} \norm{A - \lambda \, g}_{L^p_g(\Sigma)} \le C \,   \norm{\Adot}_{L^p_g(\Sigma)}
\end{equation}
with $C=C(n, \, p, \, \osc(f), \, \norm{\nabla f}_\infty)$.
\end{prop}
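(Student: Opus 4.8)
The plan is to exploit the elliptic equation satisfied by the conformal factor $f$ coming from the radial parametrization $\psi = e^f x$. For such a parametrization the induced metric is $g = e^{2f}(\sigma + df \otimes df)$, and the mean curvature $H = \tr_g A$ satisfies a quasilinear second-order equation in $f$ on $\esse^n$ (a prescribed-mean-curvature-type equation), whose leading part is a uniformly elliptic operator $L_f$ with coefficients controlled by $\osc(f)$ and $\norm{\nabla f}_\infty$. The key algebraic identity is $A = \Adot + \frac{H}{n}\,g$, so that controlling $\min_\lambda \norm{A - \lambda g}_{L^p_g}$ amounts to controlling $\norm{(\frac{H}{n} - \lambda)\,g}_{L^p_g}$ for a good choice of $\lambda$, i.e.\ controlling the oscillation of $H$ (weighted by $g$) by $\norm{\Adot}_{L^p_g}$.

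First I would choose $\lambda := \frac{1}{n}\,\fint_{\esse^n} H$ (or a weighted average adapted to $g$), so that $n\lambda - H$ has zero average and $\norm{A - \lambda g}_{L^p_g} \le \frac{1}{n}\norm{(H - n\lambda)\,g}_{L^p_g} + \norm{\Adot}_{L^p_g}$; since $g$ is comparable to $e^{2f}\sigma$ with constants depending on $\osc(f)$ and $\norm{\nabla f}_\infty$, it suffices to bound $\norm{H - n\lambda}_{L^p_\sigma}$. Second, I would write the mean-curvature equation in the schematic form $L_f f = H \cdot (\text{positive factor}) + (\text{lower-order, quadratic-gradient terms})$ and rearrange it so that the non-umbilical information enters: more precisely, since $\Adot \equiv 0$ forces $\Sigma$ to be a round sphere (hence $f$ constant), the quantity $\Adot$ should appear as the obstruction to $f$ being constant, and one derives a Poisson-type equation $\Delta_\sigma u = \divv_\sigma(\text{terms bounded by }|\Adot|_g) + \ldots$ for a suitable auxiliary function $u$ built from $f$. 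Third, I would apply Calderón--Zygmund $L^p$ elliptic estimates on $\esse^n$ together with the Poincaré inequality (to kill the mean-zero part) to obtain $\norm{H - n\lambda}_{L^p_\sigma} \approxle_{n,p,\osc(f),\norm{\nabla f}_\infty} \norm{\Adot}_{L^p_g}$, and then convert back to the $g$-weighted norms.

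The main obstacle I anticipate is the nonlinearity: the equation for $f$ is only quasilinear, so the $L^p$ elliptic estimate must be applied to the linearized operator $L_f$ with \emph{frozen} coefficients depending on $f$ itself, and one must check that all the coefficient bounds and the constant in the Calderón--Zygmund inequality depend only on $n$, $p$, $\osc(f)$ and $\norm{\nabla f}_\infty$ — not on higher norms of $f$ — which is exactly why these two quantities appear in the statement. A secondary subtlety is ensuring that every term in the rearranged equation which is not manifestly $O(|\Adot|)$ is either a total divergence (so it vanishes against constants after integration, or is handled by the divergence-form $L^p$ estimate) or can itself be absorbed; tracking the traceless part carefully through the identity $|A|^2 = |\Adot|^2 + \frac{H^2}{n}$ is what makes $\Adot$ rather than $A$ appear on the right-hand side.
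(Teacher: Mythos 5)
Your reduction is the right one: writing $A = \Adot + H\,g$ (up to normalization of $H$), choosing $\lambda$ to be an average of $H$, and converting between the $g$- and $\sigma$-norms at a cost depending only on $\osc(f)$ and $\norm{\nabla f}_\infty$ all match the structure of the actual argument. The gap is in the middle step, which is the heart of the proof: you never say \emph{where} the equation ``$\Delta_\sigma u = \divv_\sigma(\text{terms bounded by }|\Adot|) + \dots$'' comes from, and the source you gesture at --- the prescribed-mean-curvature equation for $f$, i.e.\ the quasilinear operator $L_f$ acting on $f$ --- does not produce it. That equation expresses $H$ in terms of $f,\nabla f,\nabla^2 f$; both $H$ and $\Adot$ are built from the same data, and there is no algebraic rearrangement of the second-order equation for $f$ that isolates $\Adot$ as a divergence-form source for the oscillation of $H$. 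Moreover, an elliptic estimate for $L_f$ applied to $f$ would at best control $f$ modulo the kernel of the linearized operator (the first spherical harmonics), and only under smallness hypotheses on $f$ --- which is precisely what Proposition 2.3 of the paper does later, and which is \emph{not} available here, since Proposition 2.1 must hold with a constant depending only on $\osc(f)$ and $\norm{\nabla f}_\infty$ without any smallness.

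The missing ingredient is the Codazzi equation. Contracting $\nabla^g_k A^i_j = \nabla^g_j A^i_k$ and rewriting the $g$-connection in terms of the $\sigma$-connection via the radial parametrization yields the first-order identity
\begin{equation*}
\nabla H = \frac{1}{n-1}\,\divv_\sigma \Adot + \frac{n}{n-1}\,\Adot[\nabla f],
\end{equation*}
which is exactly the statement that $\Adot$ is the obstruction to $H$ being constant (this is the classical mechanism behind the rigidity of umbilical hypersurfaces). From here one does use elliptic theory much as you describe, but applied to $H$, not to $f$: locally one solves $\Delta_\delta \gv = \divv_\delta\divv_\delta \tilde{\gf}$ with $\gf \sim \Adot$ (a double-divergence right-hand side, so the Calder\'on--Zygmund estimate gives $\norm{\gv - \lambda(x)}_{L^p} \approxle \norm{\gf}_{L^p}$ with no derivatives of $\gf$ on the right), handles the term $\Adot[\nabla f]$ as a zeroth-order perturbation costing a factor $1+\norm{\nabla f}_\infty$, and then patches the local constants $\lambda(x)$ together by a covering argument. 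Without the Codazzi identity your outline cannot be completed, so you should supply it (or an equivalent first-order relation between $\nabla H$ and $\divv\Adot$) before the elliptic machinery can be brought to bear.
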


\begin{prop}\label{ApproxTwo}
For every $0 < \epsilon < \frac{1}{4}$ there exists $0<\delta=\delta(n, \, p, \epsilon)$ with the following property. If $\Sigma$ is a $\delta$-admissible, there exist $c=c(\Sigma) \in \erre^{n+1}$ such that $\Sigma - c$ is radially parametrized and the mapping $\psi$ defined by \eqref{PsiDef} satisfies the inequalities:
 \begin{align}
 \norm{f}_\infty & \le \epsilon \label{ImportantApproxUno} \\
 \norm{\nabla f}_{\infty} &\le 2 \sqrt{\epsilon} \label{ImportantApproxDue} 
 \end{align}
\end{prop}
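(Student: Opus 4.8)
The plan is to exploit the fact that convexity plus smallness of $\norm{\Adot}_{L^p_g}$ forces $\Sigma$ to be $C^0$-close to a round sphere of the right radius, after a suitable translation, and then to upgrade this $C^0$-closeness to a $C^1$-estimate via an interpolation argument using the uniform bound on the principal curvatures coming from convexity. First I would invoke the nonlinear estimate of Proposition \ref{EstimateFirstOrder} — or rather its underlying mechanism — to find $\lambda$ with $\norm{A-\lambda g}_{L^p_g}$ small; since $\Sigma=\partial U$ is convex the principal curvatures are nonnegative, and the area normalization $\Vol_n(\Sigma)=\Vol_n(\esse^n)$ pins $\lambda$ near $1$. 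Here the main technical point is a \emph{compactness/contradiction} argument: if the conclusion failed, there would be a sequence $\Sigma_j$ of convex hypersurfaces with $\norm{\mathring A_j}_{L^p_{g_j}}\to 0$ but which, after every admissible translation, stay a definite $W^{1,\infty}$-distance from the unit sphere. Using the convexity (uniform diameter bounds via the area constraint, and the Blaschke selection theorem) one extracts a subsequence converging in Hausdorff distance to a convex body $K$; the limit must be totally umbilical in a weak/measure sense, hence $\partial K$ is a round sphere, and by the area normalization it is a \emph{unit} sphere. Translating so its center is at the origin gives the contradiction with the assumed lower bound — provided one can promote Hausdorff convergence of convex bodies to $C^1$-convergence of the boundaries, which is where the regularity theory for the Monge–Ampère type equation satisfied by the support function, together with the smallness of $\mathring A$, must be used.

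Concretely, for the quantitative statement I would argue as follows. Fix $\epsilon\in(0,\tfrac14)$. By the contradiction argument above (or by a direct stability estimate for convex bodies, e.g. in the spirit of the Alexandrov–Fenchel / John-ellipsoid circle of ideas) there is $\delta_0=\delta_0(n,p,\epsilon)$ so that $\delta$-admissibility with $\delta\le\delta_0$ yields, after translating so that $U\ni 0$ is radially parametrized with $\psi=e^{f}x$, the bound $\norm{f}_\infty\le\eta$ for any prescribed small $\eta$. Choosing $\eta\le\epsilon$ gives \eqref{ImportantApproxUno}. For \eqref{ImportantApproxDue}, I would use that convexity gives a one-sided control: $\Sigma$ lies between two concentric spheres of radii $e^{-\eta}$ and $e^{\eta}$, and a convex hypersurface trapped in such a thin annulus cannot have a steep radial graph — more precisely, at a point where $\abs{\nabla f(x)}$ is large the tangent plane to $\Sigma$ at $\psi(x)$ is nearly parallel to the radial direction $x$, and convexity then forces $\Sigma$ to exit the annulus $\{e^{-\eta}\le\abs{y}\le e^{\eta}\}$. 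Quantifying this (elementary convex geometry: a supporting hyperplane argument relating the "tilt" of the graph to the width of the slab it can stay in) yields $\norm{\nabla f}_\infty^2 \approxle \norm{f}_\infty$, hence $\norm{\nabla f}_\infty\le 2\sqrt\eta\le 2\sqrt\epsilon$ after adjusting constants and shrinking $\delta$.

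The hard part will be the first step: making rigorous the passage from $\norm{\mathring A}_{L^p_g}\to 0$ to closeness to a sphere \emph{with the correct radius and center}, uniformly over all convex hypersurfaces of the prescribed area. The $L^p$ smallness of $\mathring A$ does not by itself bound the diameter or prevent degeneration of the convex body into a thin slab; here the interplay between the area normalization $\Vol_n(\Sigma)=\Vol_n(\esse^n)$ and convexity (which, combined with smallness of the traceless part of $A$, should force the isoperimetric ratio to be nearly optimal) has to be used carefully. Once the limiting convex body is identified as a genuine round unit sphere, upgrading Hausdorff convergence to the $C^1$ convergence needed for \eqref{ImportantApproxDue} — rather than only the $C^0$ statement \eqref{ImportantApproxUno} — is the second delicate point, which I expect to handle by the elementary slab/tilt argument sketched above rather than by elliptic regularity, since convexity makes the gradient bound essentially a consequence of the height bound.
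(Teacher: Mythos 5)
Your two-step structure (first the $L^\infty$ bound on $f$ via closeness of $\Sigma$ to a unit sphere, then the gradient bound via convexity trapped in a thin annulus) is exactly the structure of the paper's proof, and your second step is sound. But your first step has a genuine gap precisely where you flag "the hard part." The paper does \emph{not} prove the $C^0$-closeness statement: it imports it wholesale as Proposition \ref{Near} from the reference \cite{Daniel} (a quantitative stability result: convexity, the area normalization, and $\norm{\Adot}_{L^p_g}\le\delta$ imply $d_{\hd}(\Sigma,\esse^n(c))\le\epsilon$ for some $c\in U$), and then reads off $\abs{\rho-1}\le\epsilon$, hence $\norm{f}_\infty\approxle\epsilon$. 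Your replacement — Blaschke selection plus "the limit must be totally umbilical in a weak/measure sense, hence a round sphere" — does not go through as stated: Hausdorff convergence of convex bodies gives weak-$*$ convergence of curvature measures but no control that lets you pass the hypothesis $\norm{\Adot_j}_{L^p_{g_j}}\to 0$ to the limit, since the second fundamental forms may oscillate or concentrate and the traceless part is not a quantity continuous under Hausdorff limits. Identifying the limit body as a ball is exactly the content of a quantitative Alexandrov/umbilicity stability theorem, i.e.\ the very result being cited; your sketch assumes it rather than proves it. (Your opening appeal to Proposition \ref{EstimateFirstOrder} is also circular at this stage, since its constant depends on $\osc(f)$ and $\norm{\nabla f}_\infty$, which are what Proposition \ref{ApproxTwo} is meant to control.)

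Your argument for \eqref{ImportantApproxDue} is correct and is a legitimate alternative to the paper's. The paper derives from $A\ge 0$ and formula \eqref{Aff} the pointwise inequality $\nabla^2 f\le\sigma+\nabla f\otimes\nabla f$, then Taylor-expands $f$ along a geodesic in the direction of steepest descent to get $\norm{\nabla f}_\infty^2\le 2\,\osc(f)\,(1+\norm{\nabla f}_\infty^2)$. Your supporting-hyperplane version is cleaner and can be made fully quantitative: since $B_{e^{-\norm{f}_\infty}}(0)\subseteq U$, the supporting hyperplane at $\psi(x)$ misses that ball, so $(\psi(x),\ni)\ge e^{-\norm{f}_\infty}$; by \eqref{norm} one has $(\psi(x),\ni)=e^{f}(1+\abs{\nabla f}^2)^{-1/2}$, whence $\abs{\nabla f(x)}^2\le e^{4\norm{f}_\infty}-1$, which gives $\norm{\nabla f}_\infty\le 2\sqrt{\epsilon}$ for $\epsilon<\tfrac14$. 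Both routes use only convexity and the $L^\infty$ bound, so once the first step is repaired (most economically, by citing the stability result as the paper does), your proof closes.
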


\begin{prop}\label{MainEstimate}
Let $\Sigma$ be a $\delta$-admissible, radially parametrized hypersurface and let its parametrization $\psi$ satisfy inequality \eqref{ImportantApproxUno}, \eqref{ImportantApproxDue}. Then the following estimate holds:
\begin{equation}\label{FinalApprox}
\norm{f - (v_f, \, \cdot) }_{W^{2, \, p}_\sigma(\esse^n)} \approxle_{n, \, p} \norm{\Adot}_{L^p_g(\Sigma)} + \sqrt{\epsilon} \norm{f}_{W^{2, \, p}_\sigma(\esse^n)}  
\end{equation}
where we have set 
\[
v_f :=   \fint_{\esse^n}   (n+1)  z f(z) \, dV_\sigma(z)
\]
\end{prop}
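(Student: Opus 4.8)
The plan is to linearise the traceless second fundamental form of $\Sigma$ around the round sphere and to invert the resulting operator — the traceless Hessian on $\esse^n$ — modulo its finite–dimensional kernel. For the radial parametrisation $\psi = e^{f}\,\id$ one first records $g = e^{2f}(\sigma + df\otimes df)$ and, up to orientation, $A = \beta\,g - \gamma\,\nabla^2 f$, where $\beta = e^{-f}(1+\abs{\nabla f}^2)^{-1/2}$, $\gamma = e^{f}(1+\abs{\nabla f}^2)^{-1/2}$ and $\nabla^2$ is the $\sigma$-Hessian. Taking the $g$-traceless part annihilates the $\beta g$ term and yields the exact pointwise identity
\[
\Adot_{ij} = -\gamma(f,\nabla f)\,\Bigl(\nabla_i\nabla_j f - \tfrac1n\,(g^{kl}\nabla_k\nabla_l f)\,g_{ij}\Bigr),
\]
which at $f\equiv 0$ equals $-\mathring{\nabla}^2_{ij} f := -\bigl(\nabla_i\nabla_j f - \tfrac{\Delta f}{n}\sigma_{ij}\bigr)$. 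I would therefore write $\Adot = -\mathring{\nabla}^2 f + \mathcal N$, where $\mathcal N$ is \emph{linear} in $\nabla^2 f$ with coefficients vanishing at $(f,\nabla f)=(0,0)$; under \eqref{ImportantApproxUno}–\eqref{ImportantApproxDue} this gives the pointwise bound $\abs{\mathcal N}\approxle_{n}\epsilon\,\abs{\nabla^2 f}$, hence $\norm{\mathcal N}_{L^p_\sigma}\approxle_{n}\epsilon\,\norm{f}_{W^{2,p}_\sigma}$. Note that $\Adot$ is used directly here, so Proposition \ref{EstimateFirstOrder} is not invoked (alternatively one could work with $A-\lambda g$ for the $\lambda$ it furnishes, trading the constant–mode discussion below for the control of $1-\lambda$). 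Finally I would record that $\ker\mathring{\nabla}^2$ on $\esse^n$ is exactly the space of restrictions of affine functions, $\Span\{1,z_1,\dots,z_{n+1}\}$, and that $(v_f,\cdot)$ is precisely the $L^2(\esse^n)$-orthogonal projection of $f$ onto $\Span\{z_1,\dots,z_{n+1}\}$, since $\fint_{\esse^n} z_i z_j\,dV_\sigma = \tfrac1{n+1}\delta_{ij}$.

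Two analytic ingredients remain. First, an elliptic estimate for the traceless Hessian: I would prove
\[
\norm{f-\Pi f}_{W^{2,p}_\sigma(\esse^n)}\approxle_{n, \, p}\norm{\mathring{\nabla}^2 f}_{L^p_\sigma(\esse^n)},
\]
with $\Pi$ the $L^2(\esse^n)$-projection onto $\Span\{1,z_1,\dots,z_{n+1}\}$, starting from the standard a priori bound $\norm{f}_{W^{2,p}}\approxle\norm{\mathring{\nabla}^2 f}_{L^p}+\norm{f}_{W^{1,p}}$ for overdetermined elliptic operators and discarding the lower–order term modulo the kernel by a Peetre/compactness argument; alternatively one decomposes $f$ into spherical harmonics and applies a Mikhlin-type multiplier theorem, using $\norm{\mathring{\nabla}^2 Y_k}_{L^2}^2 = \tfrac{(n-1)k(k+n-1)}{n}\bigl(k(k+n-1)-n\bigr)\norm{Y_k}_{L^2}^2$, which is comparable to $\norm{Y_k}_{W^{2,2}}^2$ for $k\ge 2$. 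Second, the normalisation $\Vol_n(\Sigma)=\Vol_n(\esse^n)$, which reads $\int_{\esse^n}\bigl(e^{nf}(1+\abs{\nabla f}^2)^{1/2}-1\bigr)\,dV_\sigma=0$; expanding, $n\fint_{\esse^n}f\,dV_\sigma = -\fint_{\esse^n}Q\,dV_\sigma$ with $\abs{Q}\approxle_{n} f^2+\abs{\nabla f}^2$, whence $\bigl|\fint_{\esse^n}f\,dV_\sigma\bigr|\approxle_{n}\norm{f}_\infty\fint\abs{f}+\norm{\nabla f}_\infty\fint\abs{\nabla f}\approxle_{n, \, p}\sqrt\epsilon\,\norm{f}_{W^{2,p}_\sigma}$, using \eqref{ImportantApproxUno}–\eqref{ImportantApproxDue} and Hölder on $\esse^n$.

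Then I would assemble these. Since $\norm{f}_\infty$ and $\norm{\nabla f}_\infty$ are small, $g$ and $\sigma$ are uniformly comparable, so $\norm{\Adot}_{L^p_\sigma}$ and $\norm{\Adot}_{L^p_g}$ are comparable with constants depending only on $n$; hence $\norm{\mathring{\nabla}^2 f}_{L^p_\sigma}\le\norm{\Adot}_{L^p_\sigma}+\norm{\mathcal N}_{L^p_\sigma}\approxle_{n, \, p}\norm{\Adot}_{L^p_g}+\sqrt\epsilon\,\norm{f}_{W^{2,p}_\sigma}$, and the elliptic estimate gives $\norm{f-\Pi f}_{W^{2,p}_\sigma}\approxle_{n, \, p}\norm{\Adot}_{L^p_g}+\sqrt\epsilon\,\norm{f}_{W^{2,p}_\sigma}$. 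Writing $f-(v_f,\cdot)=(f-\Pi f)+\fint_{\esse^n}f\,dV_\sigma$ and using the volume bound to absorb the constant term into the $\sqrt\epsilon\,\norm{f}_{W^{2,p}_\sigma}$ summand then yields \eqref{FinalApprox}.

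I expect the elliptic estimate to be the main obstacle: one must recognise $f\mapsto\mathring{\nabla}^2 f$ as an overdetermined elliptic operator on $\esse^n$ whose kernel is exactly the affine functions, and obtain the $L^p$ bound with no loss of derivatives modulo that kernel. A second, more delicate point — handled cleanly by the identity above — is verifying that the linearisation of $\Adot$ carries \emph{no} first-order (gradient) term, so that the operator being inverted is genuinely the traceless Hessian and not a perturbation of it. The remainder estimate for $\mathcal N$, the $L^p_g$–$L^p_\sigma$ comparison, and the volume expansion are routine.
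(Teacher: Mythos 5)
Your argument is correct, but it reaches \eqref{FinalApprox} by a genuinely different route from the paper. The paper first controls $\norm{A-\overline{H}\,g}_{L^p_g}$ by $\norm{\Adot}_{L^p_g}$ via Proposition \ref{EstimateFirstOrder} and Corollary \ref{FirstMean} (hence via the Codazzi/divergence-structure machinery of Lemmas \ref{NewLemma} and \ref{AnArmLemma}), then linearises $A-\overline{H}\,g$ to $-(\nabla^2 f+f\sigma)$, using the volume normalisation to show $\abs{\overline{H}-1}=O_{\sqrt{\epsilon}}(\norm{f}_{2,\,p})$, takes the trace to obtain $\norm{\Delta f+nf}_{L^p}$, and inverts $\Delta+n$ modulo its kernel $\Span\{z_1,\dots,z_{n+1}\}$ via Obata's theorem. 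You instead exploit the exact identity expressing $\Adot$ as $-\gamma$ times the $g$-traceless part of the $\sigma$-Hessian (correct: the pure-trace piece $\beta g$ of $A$ is annihilated exactly, and your remainder bound $\abs{\mathcal{N}}\approxle_{n}\epsilon\,\abs{\nabla^2 f}$ checks out under \eqref{ImportantApproxUno}--\eqref{ImportantApproxDue}), so Proposition \ref{EstimateFirstOrder} is never needed; the price is that you must invert the overdetermined operator $\mathring{\nabla}^2$, whose kernel contains the constants in addition to the linear functions, so the constant mode must be killed separately --- which you do with the same volume normalisation the paper uses for $\overline{H}-1$. Your identification of $(v_f,\,\cdot)$ as the $L^2$-projection onto $\Span\{z_1,\dots,z_{n+1}\}$ and the split $f-(v_f,\,\cdot)=(f-\Pi f)+\fint_{\esse^n} f\,dV_\sigma$ are exactly right. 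What each approach buys: yours makes this proposition self-contained and the linearisation cleaner (no $\overline{H}$ to discuss, and an $O(\epsilon)$ rather than $O(\sqrt{\epsilon})$ remainder from $\mathcal{N}$), at the cost of proving the $L^p$ bound for $\mathring{\nabla}^2$ modulo affine functions; that bound is standard (a priori estimate for overdetermined elliptic operators plus a Peetre compactness argument, with the kernel characterisation $\nabla^2 f=\tfrac{\Delta f}{n}\sigma\Rightarrow f$ affine following from the contracted commutation identity on $\esse^n$), but it is an ingredient the paper avoids by reducing to the scalar operator $\Delta+n$, for which the kernel statement is precisely Obata's theorem. One caveat: the Mikhlin-multiplier alternative you sketch only yields the $p=2$ case as stated, so for general $p$ you should rely on the a priori estimate plus compactness route.
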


We show how propositions \ref{EstimateFirstOrder}, \ref{ApproxTwo} and \ref{MainEstimate} prove theorem \ref{Main}. 
\begin{proof}[Proof of theorem \ref{Main}]
Let $0<\epsilon< \frac{1}{4} $ be fixed for the moment. At the end of the argument we will choose it small enough. Let $\delta$ and $\Sigma$ be given such that $\psi$ satisfies inequalities \ref{ImportantApproxUno}, \ref{ImportantApproxDue} and \ref{FinalApprox}.
We notice that for every $c \in U$ we can define 
\begin{equation}
\psi_c \daA{\esse^n}{\Sigma - c },\ \psi_c(x) := \rho_c(x) \, x := e^{f_c(x)} \, x
\end{equation}
For every $c$ the mapping $\psi_c$ is an alternative radial parametrization for $\Sigma$, and it is a well defined diffeomorphism. We can also define:
\begin{equation}
\Phi \daA{U}{\erre^{n+1}},\ \Phi(c):= \fint_{\esse^n} (n+1)  z f_c(z) \, dV_\sigma(z)
\end{equation}
Our idea is to find $c_0 \in U$ such that $\Phi(c_0)=0$. Then we are done, because for such $f_{c_0}$ we obtain the estimate 
\[
\norm{f_{c_0}}_{W^{2, \, p}_\sigma(\esse^n)} \approxle_{n, \, p} \norm{\Adot}_{L^p_g(\Sigma)} + \sqrt{\epsilon} \, \norm{f_{c_0}}_{W^{2, \, p}_\sigma(\esse^n)}  
\]
Therefore we can find $\epsilon_0=\epsilon_0(n, \, p)$ so that the last term can be absorbed:
\[
\norm{f_{c_0}}_{W^{2, \, p}_\sigma(\esse^n)} \approxle_{n, \, p} \norm{\Adot}_{L^p_g(\Sigma)} 
\]
This estimate proves theorem \ref{Main} with $c=c_0$.

Let us find such $c_0$. By the hypothesis made we have that $\psi:=\psi_0$ satisfies the estimates \eqref{ImportantApproxUno}, \eqref{ImportantApproxDue} and \eqref{FinalApprox}. We can easily find a radius $r=r(\Sigma)>0$ such that for every $c \in \D_r$ we have that $f_c$ still satisfies such estimates. Now we work with $H$ inside the disk $\D_r$. 
We start with the following simple consideration: for every $z \in \esse^n$ there exists $x_c=x_c(z)$ in $\esse^n$ so that
\[
\psi_c(z) = \psi(x_c) - c
\]
We expand this equality and find
\[
\rho_c(z) \, z = \rho(x_c) \, x_c - c 
\]
We take the absolute value and obtain that $\rho_c$ satisfies the equality:
\[
\rho_c(z) = \abs{\rho(x_c) \, x_c - c}
\]
while $x_c=x_c(z)$ satisfies the relation
\begin{equation}\label{xc}
x_c(z) = \frac{\rho_c(z) \, z + c}{\rho(x_c(z))}
\end{equation}
Using the $W^{1, \, \infty}$-smallness, we approximate $\rho$ and $\rho_c$, and find
\begin{equation}\label{xcApp}
x_c(z) = \coup{ z + c} \coup{1 + O(\epsilon)}
\end{equation}
We approximate $f_c$:
\begin{align*}
f_c(z) &= \frac{1}{2} \, \log \abs*{ e^{f(x_c) } x_c - c }^2 = \frac{1}{2} \log \coup*{ e^{2 f(x_c)} - 2 e^{f(x_c)} (x_c, \, c) + \abs{c}^2 }^2 \\
&= \frac{1}{2} \log \coup*{1 + \abs{c}^2 - 2(x_c(z), \, c) + O(\epsilon)} \\
&= \abs{c}^2 - 2(x_c(z), \, c) + o(\abs{c}) + O(\epsilon) =  - 2 (z, \, c) +  o(\abs{c}) + O(\epsilon)
\end{align*}
This allows us to write $\Phi$ as follows:
\begin{align*}
\Phi(c) 
&= - (n+1 )\fint_{\esse^n}  (x_c(z), \, c)  \, z \, dV_\sigma + o(\abs{c}) + O(\epsilon)   \\
&= - (n+1 )\fint_{\esse^n}  ((z+c)(1 + O(\epsilon)), \, c)  \, z \, dV_\sigma + o(\abs{c}) + O(\epsilon)    \\
&=  - (n+1) \, c  + o(\abs{c}) + O(\epsilon) 
\end{align*}
Now we define $\phi:= - \frac{1}{n+1} \, \Phi$: we want to show that $0$ is in the range of $\phi$. We restrict $\phi$ to $\D_r$ and finally we choose $0<\epsilon<  \frac{1}{4}$ and $0<r$ so small that 
\[
\abs{\phi(x) - x} < \frac{1}{2} \mbox{ for every } x \in \D_r
\]
Let us argue by contradiction: suppose that $0 \notin R(\phi)$, then we can consider $\overline{\phi}:= \frac{\phi}{\abs{\cdot}} \daA{\esse^n}{\esse^n}$, and notice that 
\begin{equation}\label{Homotopic}
\abs{\overline{\phi}(x) - x} < 2 \mbox{ for every } x \in \esse^n
\end{equation}
It is easy to see that if $c_n \to c_0$ then $f_{c_n} \to f_{c_0}$ pointwise and the family $\set{f_c}_{c \in \D_r}$ is equibounded. This proves that $\Phi$ and therefore also $\phi$ are continuous. However, estimate \eqref{Homotopic} tells us that $\overline{\phi}$ is homotopic to the identity; but at the same time, we obtain that $\overline{\phi}$ is the restriction of a continuous map defined on the ball, hence it cannot be homotopic to the identity.
\end{proof}
The rest of the article will be devoted to proving the three propositions.
\section{Proofs of the propositions}
Before starting the proofs we need to report a computational lemma which shows the expression for the main geometric quantities of $\Sigma$ in the radial parametrization $\psi$. 
\begin{lemma}\label{Computations}
Let $\psi$ be as in \eqref{PsiDef}. Then we have the following expressions:
\begin{align}
g_{ij} &= e^{2 f} \coup*{\sigma_{ij} + \nabla_i f \, \nabla_j f } \label{g} \\
g^{ij} &= e^{- 2 f} \coup*{\sigma_{ij} - \frac{\nabla_i f \, \nabla_j f}{1 + \abs{\nabla f}^2} } \label{ginv} \\
\ni(x) &= \frac{1}{\sqrt{1 + \abs{\nabla f}^2}} \coup*{x - \grad_\sigma f(x)} \label{norm} \\
A_{ij} &= \frac{e^f}{\sqrt{1 + \abs{\nabla f}^2}} \coup*{ \sigma_{ij} + \nabla_i f \, \nabla_j f - \nabla^2_{ij} f} \label{Aff} \\
A^i_j 
&= \frac{e^{-f}}{\sqrt{1 + \abs{\nabla f}^2}} \coup*{ \delta^i_j - \nabla^i \nabla_j f + \frac{1}{1 + \abs{\nabla f}^2} \nabla^i f \, \nabla^2 f [\nabla f]_j} \label{Aalta} \\
dV_g &= e^{nf} \, \sqrt{1 + \abs{\nabla f}^2} \, dV_\sigma \label{Volume} \\
\Gammag_{ij}^k 
&= \Gamma_{ij}^k + \frac{1}{1 + \abs{\nabla f}^2} \nabla^2_{ij} f \, \nabla^k f + \coup*{\nabla_i f \, \delta^k_i + \nabla_j f \, \delta^k_i - \nablag^k f \, g_{ij}}   \label{Christoffel} 
\end{align}
\end{lemma}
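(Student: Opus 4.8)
The plan is to carry out all the computations in a fixed local chart $(u^i)$ of $\esse^n$, exploiting the special structure of the inclusion $\esse^n \hookrightarrow \erre^{n+1}$: writing $\partial_i := \partial/\partial u^i$, the position vector $x$ is itself the outer unit normal of $\esse^n$, so that $\abs{x}^2 = 1$, $(x,\partial_i x) = 0$, $\sigma_{ij} = (\partial_i x, \partial_j x)$, and the Gauss formula of the round sphere reads $\partial_i\partial_j x = \Gamma_{ij}^k\,\partial_k x - \sigma_{ij}\,x$. Throughout, $\nabla$ denotes the Levi-Civita connection of $\sigma$, so $\nabla_i f = \partial_i f$ and $\nabla^2_{ij} f = \partial_i\partial_j f - \Gamma_{ij}^k \partial_k f$.

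I would start from $\partial_i\psi = e^f \coup{\nabla_i f\,x + \partial_i x}$, obtained by differentiating \eqref{PsiDef}. Taking scalar products of $\partial_i\psi$ with $\partial_j\psi$ and using the relations above immediately gives \eqref{g}. Then \eqref{ginv} follows by applying the Sherman--Morrison formula to the rank-one perturbation $\sigma_{ij} + \nabla_i f\,\nabla_j f$ of $\sigma$ (alternatively, one simply verifies $g^{ik}g_{kj} = \delta^i_j$ directly), and the matrix-determinant lemma applied to the same rank-one perturbation yields $\det g = e^{2nf}(1+\abs{\nabla f}^2)\det\sigma$, which is \eqref{Volume}. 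For \eqref{norm} I would propose the candidate $\ni = (1+\abs{\nabla f}^2)^{-1/2}\coup{x - \grad_\sigma f}$ and check that it has unit length and is orthogonal to every $\partial_j\psi$; the sign is the one making $\ni$ the outer normal, since it reduces to $x$ when $f\equiv 0$.

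For the second fundamental form I would differentiate $\partial_i\psi$ once more, substitute the sphere's Gauss formula for $\partial_i\partial_j x$, and pair with $\ni$ using the sign convention $A_{ij} = (\partial_i\psi, \partial_j\ni) = -(\partial_i\partial_j\psi, \ni)$ (so that the unit sphere has $A = g = \sigma$); after the terms proportional to $(x, \grad_\sigma f) = 0$ drop out, one is left with \eqref{Aff}. Formula \eqref{Aalta} is then obtained by raising an index with \eqref{ginv} and noticing that the full coefficient of $\nabla^i f\,\nabla_j f$ cancels, thanks to the identity $1 - (1+\abs{\nabla f}^2)^{-1} - \abs{\nabla f}^2(1+\abs{\nabla f}^2)^{-1} = 0$.

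Finally, for the Christoffel symbols the cleanest route is to use that the difference $S_{ij}^k := \Gammag_{ij}^k - \Gamma_{ij}^k$ is a $(1,2)$-tensor, symmetric in $i,j$, given by $S_{ij}^k = \tfrac12 g^{kl}\coup{\nabla_i g_{jl} + \nabla_j g_{il} - \nabla_l g_{ij}}$ with $\nabla$ the connection of $\sigma$ --- this works precisely because $\nabla\sigma = 0$ removes all non-tensorial terms. Expanding $\nabla_i g_{jl}$ from \eqref{g}, the symmetrised combination in the bracket collapses to $e^{2f}\coup{\nabla_i f\,\sigma_{jl} + \nabla_j f\,\sigma_{il} - \nabla_l f\,\sigma_{ij} + \nabla_i f\,\nabla_j f\,\nabla_l f + \nabla^2_{ij} f\,\nabla_l f}$; contracting with $g^{kl}$ from \eqref{ginv}, grouping the resulting $\nabla^k f$-terms, and using $\sigma_{ij} + \nabla_i f\,\nabla_j f = e^{-2f}g_{ij}$ together with $\nablag^k f = e^{-2f}(1+\abs{\nabla f}^2)^{-1}\nabla^k f$ yields exactly \eqref{Christoffel}. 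I expect this last computation to be the main obstacle: not conceptually hard, but the longest piece of index bookkeeping; everything preceding it is a short direct calculation.
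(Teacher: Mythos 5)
Your proposal is correct and is essentially the paper's own argument: everything is a direct computation in a chart from $\partial_i\psi = e^f(\nabla_i f\,x+\partial_i x)$, using the structure equations of the round sphere; your $A_{ij}=-(\partial_i\partial_j\psi,\nu)$ is equivalent to the paper's $A_{ij}=(\partial_i\psi,\partial_j\nu)$, and your determinant computation for $dV_g$ is the same Jacobian the paper extracts from the area formula. The one place you genuinely diverge is \eqref{Christoffel}: instead of expanding the coordinate formula $\frac12 g^{ks}(\partial_i g_{js}+\partial_j g_{is}-\partial_s g_{ij})$ as the paper does, you use that the difference tensor is $\frac12 g^{kl}(\nabla_i g_{jl}+\nabla_j g_{il}-\nabla_l g_{ij})$ with $\nabla$ the $\sigma$-connection; this buys you a cleaner bookkeeping (the non-tensorial terms and the $\Gamma^s_{ij}\partial_s f$ cancellations never appear), and carrying it out one indeed lands on $\Gammag^k_{ij}=\Gamma^k_{ij}+\frac{\nabla^2_{ij}f\,\nabla^k f}{1+|\nabla f|^2}+\nabla_i f\,\delta^k_j+\nabla_j f\,\delta^k_i-\nablag^k f\,g_{ij}$, i.e.\ the formula as stated in the lemma (up to the obvious $\delta^k_i$/$\delta^k_j$ typo there), whereas the paper's own final display retains a spurious factor $\tfrac12$ in front of the last bracket coming from forgetting the $2$ produced by differentiating $e^{2f}$.
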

The proof of the lemma is in the last section of the article.
\subsection{Proof of proposition \ref{EstimateFirstOrder}}
Here we prove proposition \ref{EstimateFirstOrder}. We show that the proposition follows by two lemmas. 
\begin{lemma}\label{NewLemma}
Let $\Sigma$ be a radially parametrized hypersurface and let us call $H := \frac{1}{n} g^{ij} A_{ij}$. We have the equality
\begin{equation}\label{New}
\nabla H = \frac{1}{n-1} \divv_\sigma \Adot  + \frac{n}{n-1} \, \Adot[\nabla f]
\end{equation}
\end{lemma}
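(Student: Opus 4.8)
The plan is to derive the identity by starting from the contracted second Bianchi identity (Codazzi equations) for the hypersurface $\Sigma$ in $\erre^{n+1}$, then rewriting everything in terms of the ambient Euclidean structure and the radial parametrization.

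First I would recall that in $\erre^{n+1}$ the ambient curvature vanishes, so the Codazzi equation reads $\nablag_i A_{jk} = \nablag_j A_{ik}$, i.e. $\nablag A$ is a totally symmetric $3$-tensor. Contracting with $g^{jk}$ gives the classical identity $\divv_g A = n\,\nablag H$ (where $\divv_g$ denotes the $g$-divergence on the last index and $H=\frac1n\tr_g A$). Decomposing $A = \Adot + H g$ and using $\nablag g = 0$, one obtains $\divv_g \Adot + n\,\nablag H = \divv_g A = n\,\nablag H$ — which is degenerate — so instead I would contract the Codazzi identity more carefully, keeping the full trace relation: from $\nablag_i A_{jk}=\nablag_j A_{ik}$, contracting over $i,k$ with $g^{ik}$ yields $g^{ik}\nablag_i A_{jk} = g^{ik}\nablag_j A_{ik} = n\,\nablag_j H$, and the left side is $(\divv_g A)_j$. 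Writing $A=\Adot+Hg$ on the left gives $(\divv_g\Adot)_j + \nablag_j H = n\,\nablag_j H$, hence
\[
\divv_g \Adot = (n-1)\,\nablag H .
\]
This is the well-known Schur-type identity; it already has the shape of \eqref{New} but with $g$-objects rather than $\sigma$-objects.

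The second and main step is to convert $\divv_g \Adot$ and $\nablag H$ into their $\sigma$-counterparts. Since $H=\frac1n g^{ij}A_{ij}$ is a scalar, $\nablag H=\nabla H=dH$ coincides with the $\sigma$-gradient as a $1$-form, so no correction is needed there. For the divergence term I would use the formula relating the two Levi-Civita connections, namely \eqref{Christoffel} from Lemma \ref{Computations}: writing $\Gammag = \Gamma + T$ where $T^k_{ij}$ is the explicit tensor involving $\nabla^2 f$, $\nabla f$ and $g$, one has for any symmetric $2$-tensor $S$
\[
(\divv_g S)_j = g^{ik}\nablag_i S_{kj} = g^{ik}\bigl(\nabla_i S_{kj} - T^\ell_{ik}S_{\ell j} - T^\ell_{ij}S_{k\ell}\bigr).
\]
Applying this to $S=\Adot$, expanding $T$ from \eqref{Christoffel}, and contracting with $g^{ik}=e^{-2f}(\sigma^{ik}-\dots)$ from \eqref{ginv}, the terms involving $T$ should collapse: the pieces proportional to $g_{ij}$ pair with $\tr_g\Adot=0$, and the remaining pieces should reorganize precisely into $\frac{n}{n-1}\,\Adot[\nabla f]_j$ after also accounting for the $e^{2f}$ factor in $g$ versus $\sigma$ and the factor $\frac{1}{n-1}$. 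So the identity $\divv_g\Adot=(n-1)\nabla H$ becomes $\divv_\sigma\Adot + (\text{explicit }\nabla f\text{ terms}) = (n-1)\nabla H$, which rearranges to \eqref{New}.

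The main obstacle I anticipate is the bookkeeping in this second step: carefully tracking which index of $\Adot$ is being summed, handling the discrepancy between raising/lowering indices with $g$ versus $\sigma$ (a factor $e^{\pm 2f}$ and the rank-one correction in \eqref{ginv}), and verifying that the numerous $\nabla f$-terms coming from $T$ in \eqref{Christoffel} combine into exactly the single clean term $\frac{n}{n-1}\Adot[\nabla f]$ rather than leaving a residual. A useful sanity check is dimensional/trace consistency: contracting \eqref{New} with $\nabla f$ or taking a further $\sigma$-divergence should reproduce known identities, and the coefficients $\frac{1}{n-1},\frac{n}{n-1}$ are forced by matching the Schur identity in the flat ($f\equiv 0$) case and the leading linear-in-$f$ correction. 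I would organize the computation so that the $f$-independent part reproduces the classical $\divv_\sigma\Adot=(n-1)\nabla H$ on the round sphere, and then treat the $f$-dependent corrections as the only thing that needs verification.
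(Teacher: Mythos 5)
Your plan is essentially the paper's proof with the two main operations performed in the opposite order: the paper first rewrites the Codazzi identity $\nablag_k A^i_j = \nablag_j A^i_k$ in terms of the $\sigma$-connection via \eqref{Christoffel} and only then traces over $i=j$, whereas you first trace (obtaining the Schur-type identity $\divv_g \Adot = (n-1)\,\nablag H$) and then convert the $g$-divergence into the $\sigma$-divergence. The ingredients are the same — Codazzi, formula \eqref{Christoffel}, and $\tr \Adot = 0$ — and your route does close up correctly, so this is not a different method but a reshuffling of the same computation. (Your initial ``degenerate'' contraction is a slip — $\divv_g(Hg) = \nablag H$, not $n\,\nablag H$ — but you immediately redo it correctly.)

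Two points that you have filed under ``bookkeeping'' deserve to be flagged, because one of them is the only genuinely non-obvious step of the whole lemma. First, the divergence in \eqref{New} must be taken of the \emph{mixed} tensor $\Adot^i_j = g^{ik}\Adot_{kj}$ (this is what is fed into Lemma \ref{AnArmLemma}); if you insist on working with the $(0,2)$-tensor and contracting with $\sigma^{ik}$, the conformal factors do not disappear and you will not land on \eqref{New} as stated. Second, when you expand $\nablag_i \Adot^i_j - \nabla_i \Adot^i_j$ using the difference tensor $T$ from \eqref{Christoffel}, the terms built from $\nabla_i f\,\delta^k_j + \nabla_j f\,\delta^k_i - \nablag^k f\, g_{ij}$ do collapse exactly as you predict (using $\tr\Adot = 0$ and $g_{ij}\Adot^i_l\nablag^l f = \Adot^k_j\nabla_k f$), but the contribution of the remaining piece $\frac{1}{1+\abs{\nabla f}^2}\nabla^2_{ij} f\,\nabla^k f$ does \emph{not} cancel for trivial reasons: it produces the difference $\nabla^2_{li} f\,\nabla^i f\,\Adot^l_j - \nabla^2_{ij} f\,\nabla^l f\,\Adot^i_l$, whose vanishing requires the symmetry of $B_{jk}:=\nabla^2_{jl} f\, A^l_k$ in $(j,k)$. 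This symmetry is exactly what the paper verifies by a separate computation using the explicit expression \eqref{Aalta} for $A^i_j$; it is not a consequence of the symmetry of $\nabla^2 f$ alone, since the contraction mixes an upper and a lower index. Your sanity check ``match the $f\equiv 0$ case and the linear-in-$f$ correction'' would not detect an error here, because this term is quadratic and higher in $f$. So the proposal is sound, but to be a complete proof it must include this one identity explicitly.
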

\begin{lemma}\label{AnArmLemma}
Let $p \in (1, \, \infty)$, $\gu \in C^\infty(\esse^n)$, $\gf \in \Gamma(T^*\esse^n \otimes T\esse^n)$, $\gh \in \Gamma(T\esse^n)$ be given so that the following equation holds:
\[
\nabla \gu = \divv_\sigma \gf + \gf[\gh]
\]
There exists $\lambda_0 \in \erre$ such that the following estimate holds:
\begin{equation}\label{AnArm}
\norm{\gu - \lambda_0}_{L^p_\sigma(\esse^n)} \approxle_{n, \, p} \coup*{1 + \norm{\gh}_\infty} \norm{\gf}_{L^p_\sigma(\esse^n)}
\end{equation}
\end{lemma}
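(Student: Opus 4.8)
The plan is to exploit that the right-hand side of the equation is a total differential: although $\divv_\sigma \gf$ carries a derivative of $\gf$, the identity $\divv_\sigma\gf=\nabla\gu-\gf[\gh]$ forces it to be (up to a lower-order term) a gradient, and once it is tested against another gradient and integrated by parts that derivative gets transferred onto the test object instead of onto $\gf$. Concretely, I would set $\lambda_0:=\fint_{\esse^n}\gu\,dV_\sigma$ and estimate $\norm{\gu-\lambda_0}_{L^p_\sigma}$ by $L^p$--$L^{p'}$ duality,
\[
\norm{\gu-\lambda_0}_{L^p_\sigma(\esse^n)}=\sup\Big\{\,\int_{\esse^n}(\gu-\lambda_0)\,\phi\,dV_\sigma\ :\ \phi\in C^\infty(\esse^n),\ \norm{\phi}_{L^{p'}_\sigma(\esse^n)}\le 1\,\Big\}.
\]
Since $\int_{\esse^n}(\gu-\lambda_0)\,dV_\sigma=0$, replacing $\phi$ by $\phi-\fint_{\esse^n}\phi\,dV_\sigma$ does not change the pairing, and $\norm{\phi-\fint\phi}_{L^{p'}_\sigma}\approxle_{n,p}\norm{\phi}_{L^{p'}_\sigma}$; so one may assume $\fint_{\esse^n}\phi\,dV_\sigma=0$.

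For such a mean-zero $\phi$, solve the Poisson equation $\Delta w=\phi$ on $\esse^n$ with $\fint_{\esse^n}w\,dV_\sigma=0$. This is uniquely solvable, and by the Calderón--Zygmund estimate for the Laplace--Beltrami operator on the round sphere, $\norm{w}_{W^{2,p'}_\sigma(\esse^n)}\approxle_{n,p}\norm{\phi}_{L^{p'}_\sigma(\esse^n)}$. Now integrate by parts twice and insert the hypothesis:
\[
\int_{\esse^n}(\gu-\lambda_0)\,\phi\,dV_\sigma=\int_{\esse^n}\gu\,\Delta w\,dV_\sigma=-\int_{\esse^n}\langle\nabla\gu,\nabla w\rangle_\sigma\,dV_\sigma=-\int_{\esse^n}\langle\divv_\sigma\gf+\gf[\gh],\nabla w\rangle_\sigma\,dV_\sigma.
\]
For the first term, one further integration by parts moves the divergence off $\gf$ and onto $w$, producing the Hessian:
\[
-\int_{\esse^n}\langle\divv_\sigma\gf,\nabla w\rangle_\sigma\,dV_\sigma=\int_{\esse^n}\langle\gf,\nabla^2 w\rangle_\sigma\,dV_\sigma,
\]
so by Hölder this is bounded by $\norm{\gf}_{L^p_\sigma}\norm{\nabla^2 w}_{L^{p'}_\sigma}\le\norm{\gf}_{L^p_\sigma}\norm{w}_{W^{2,p'}_\sigma}$. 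For the second term, directly $\big|\int_{\esse^n}\langle\gf[\gh],\nabla w\rangle_\sigma\,dV_\sigma\big|\le\norm{\gh}_\infty\,\norm{\gf}_{L^p_\sigma}\,\norm{\nabla w}_{L^{p'}_\sigma}\le\norm{\gh}_\infty\,\norm{\gf}_{L^p_\sigma}\,\norm{w}_{W^{2,p'}_\sigma}$.

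Combining the two bounds with the elliptic estimate gives $\big|\int_{\esse^n}(\gu-\lambda_0)\,\phi\,dV_\sigma\big|\approxle_{n,p}(1+\norm{\gh}_\infty)\,\norm{\gf}_{L^p_\sigma}\,\norm{\phi}_{L^{p'}_\sigma}$, and taking the supremum over admissible $\phi$ yields \eqref{AnArm} with $\lambda_0=\fint_{\esse^n}\gu\,dV_\sigma$. The only real obstacle is the div--curl--type cancellation in the previous paragraph: the extra derivative sitting in $\divv_\sigma\gf$ is harmless precisely because it is paired against a gradient $\nabla w$, which after integration by parts becomes $\nabla^2 w$, and that is controlled by $\norm{\phi}_{L^{p'}_\sigma}$ through standard $L^{p'}$ elliptic regularity on $\esse^n$. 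Everything else is bookkeeping — justifying the integrations by parts from the smoothness of $\gu$ and $\gf$ together with the $W^{2,p'}$-regularity of $w$ (and a density argument if one wants $\phi$ merely in $L^{p'}$), and checking that all constants depend only on $n$ and $p$.
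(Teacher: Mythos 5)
Your argument is correct, and it takes a genuinely different route from the paper. You prove the estimate globally by duality: you take $\lambda_0=\fint_{\esse^n}\gu\,dV_\sigma$, pair $\gu-\lambda_0$ against a mean-zero $\phi\in L^{p'}$, solve the dual Poisson problem $\Delta w=\phi$ with the global Calder\'on--Zygmund estimate $\norm{w}_{W^{2,\,p'}_\sigma}\approxle_{n,\,p}\norm{\phi}_{L^{p'}_\sigma}$ on the round sphere, and then transfer the derivative in $\divv_\sigma\gf$ onto $w$ by two integrations by parts, so that $\divv_\sigma\gf$ is tested against $\nabla^2 w$ and $\gf[\gh]$ against $\nabla w$. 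All the steps are sound: the reduction to mean-zero test functions costs only a factor $2$, the integrations by parts on the closed manifold have no boundary terms, and every constant depends only on $n$ and $p$. The paper instead argues locally: it passes to normal coordinates on small geodesic balls where $\sigma$ is $\epsilon$-close to the flat metric, splits $\gu=\gv+\gw$ into solutions of two flat boundary-value problems (one handled by a cited result giving a local constant $\lambda(x)$, the other by a standard $L^p$ estimate for $\Delta_\delta\gw=\divv(\gf[\gh]+O_\epsilon(\gf))$), and then globalizes via a covering lemma that compares the local constants $\lambda_j$ on overlapping balls. Your approach buys a shorter and cleaner proof with a canonical choice of $\lambda_0$ (the mean of $\gu$) and no patching argument, at the price of invoking global $L^{p'}$ elliptic regularity for the Laplace--Beltrami operator on $\esse^n$; the paper's approach only needs flat-space results that it can cite directly, but pays for this with the freezing-of-coefficients errors $O_\epsilon(\gf)$ and the covering/chaining lemma needed to reconcile the local constants.
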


We show now how lemmas \ref{NewLemma} and \ref{AnArmLemma} prove proposition \ref{EstimateFirstOrder}. 
\begin{proof}[Proof of proposition \ref{EstimateFirstOrder}]
We apply lemma \ref{AnArmLemma} with $\gu=H$, $\gh= n \nabla f$ and $\gf^i_j = \frac{1}{n-1} g^{ik} \Adot_{jk}$. There exists $\lambda_0 \in \erre$ such that
\[
\norm{H - \lambda_0}_{L^p_\sigma(\esse^n)} \approxle_{n, \, p} \coup*{1 + \norm{\nabla f}_\infty} \norm{\Adot}_{L^p_\sigma(\esse^n)}
\]
We use this result and make the straight estimate:
\begin{align*}
\norm{A - \lambda_0 \, g}_{L^p_g} 
&\le \norm{\Adot}_{L^p_g} + n \norm{H - \lambda_0}_{L^p_g} \le \norm{\Adot}_{L^p_g} + n \max_{\esse^n} \abs*{\frac{dV_g}{dV_\sigma}}^{\frac{1}{p}} \norm{H - \lambda_0}_{L^p_\sigma} \\
& \approxle_{n, \, p} \norm{\Adot}_{L^p_g} + \max_{\esse^n} \abs*{\frac{dV_g}{dV_\sigma}}^{\frac{1}{p}} (1 + \norm{\nabla f}_\infty) \norm{\Adot}_{L^p_\sigma} \\
& \le (1 + \norm{\nabla f}_\infty) \max_{\esse^n} \abs*{\frac{dV_g}{dV_\sigma}}^{\frac{1}{p}}  \max_{\esse^n} \abs*{\frac{dV_\sigma}{dV_g}}^{\frac{1}{p}}  \norm{\Adot}_{L^p_g} \\
&\le (1 + \norm{\nabla f}_\infty)^{\frac{p+1}{p}} \exp \coup*{\frac{n}{p} \osc(f)} \norm{\Adot}_{L^p_g} \\
&= C \norm{\Adot}_{L^p_g(\esse^n)}
\end{align*}
where the dependence of $C$ is the following:
\begin{equation}\label{ConstantArm}
C(n, \, p, \, \osc(f), \, \nabla f)=C_{n, \, p} (1 + \norm{\nabla f}_\infty)^{\frac{p+1}{p}} \exp \coup*{\frac{n}{p} \osc(f)}
\end{equation}
This estimate completes the proof.
\end{proof}
Now we prove the two lemmas and complete this section.
\begin{proof}[Proof of lemma \ref{NewLemma}]
We firstly recall the Codazzi equation for the second fundamental form (see \cite{GHL} for a proof):
\begin{equation}\label{Codazzi}
\nablag_k A^i_j = \nablag_j A^i_k
\end{equation}
Equation \eqref{Codazzi} however holds for the Levi-Civita connection $\nablag$ taken with respect to the metric $g$, while we need to find a formula for the $\sigma$ connection $\nabla$.
So we firstly expand $\nablag A$:
\begin{align*}
\nablag_k A^i_j
&= D_k A^i_j + \Gammag_{kl}^i A^l_j - \Gammag_{kj}^l A^i_l \\
\end{align*}
Now we plug this expression into \eqref{Codazzi}, and use the expression \eqref{Christoffel} for the Christoffel symbols obtaining
\begin{align*}
\nabla_k A^i_j 
&= \nabla_j A^i_k + \frac{\nabla^i f}{1 + \abs{\nabla f}^2} \coup*{ \nabla^2_{jl} f A^l_k - \nabla^2_{jk} f A^i_l } + \\ 
&  +  \coup*{\nabla_j f \, \delta^i_l + \nabla_l \, f \delta^i_j - \nablag^i f \,  g_{jl}} A^l_k -  \coup*{\nabla_k f \, \delta^i_l + \nabla_l f \, \delta^i_k - \nablag^i f \, g_{kl}}A^l_j
\end{align*}
We now notice  that $ \nabla^2_{jl} f A^l_k = \nabla^2_{kl} f A^l_j$. Expanding the term in fact we have
\begin{align*}
\nabla^2_{jl} f A^l_k 
&= \frac{e^{-f} \,  \nabla^2_{jl} f}{1 + \abs{\nabla f}^2} \coup*{ \delta^l_k - \nabla^l \nabla_k f + \frac{1}{1 + \abs{\nabla f}^2} \nabla^2 f[\nabla f]_j \, \nabla^l f } \\
&= \frac{e^{-f}}{1 + \abs{\nabla f}^2} \coup*{ \nabla^2_{jk} f - \coup*{\nabla^2 f \, \nabla^2 f}_{jk}  + \frac{  \nabla^2 f [\nabla f]_j  }{1 + \abs{\nabla f}^2} \, \nabla^2 f[\nabla f]_k   } \\ 
&= \frac{e^{-f}}{1 + \abs{\nabla f}^2} \coup*{ \nabla^2_{jk} f - \coup*{\nabla^2 f \, \nabla^2 f}_{kj}  + \frac{ \nabla^2 f [\nabla f]_k  }{1 + \abs{\nabla f}^2}  \, \nabla^2 f[\nabla f]_j   } \\
&=\nabla^2_{kl} f A^l_j
\end{align*}
This allows us to simplify the equation, obtaining 
\begin{align*}
\nabla_k A^i_j 
&= \nabla_j A^i_k + \coup*{\nabla_j f \, \delta^i_l + \nabla_l \, f \delta^i_j } A^l_k - \coup*{\nabla_k f \, \delta^i_l + \nabla_l f \, \delta^i_k }A^l_j 
\end{align*}
We track the indexes $i$ and $j$:
\begin{align*}
\nabla_k A^i_i 
&= \nabla_i A^i_k + \coup*{\nabla_i f \, \delta^i_l + \nabla_l \, f \delta^i_i } A^l_k - \coup*{\nabla_k f \, \delta^i_l + \nabla_l f \, \delta^i_k }A^l_i \\
&= \nabla_i A^i_k + n A^l_k \, \nabla_l f - A^i_i \, \nabla_k f
\end{align*}
Finally we complete the proof. Indeed we write 
\[
A^i_j = \Adot^i_j + \frac{1}{n} A^l_l \, \delta^i_j = \Adot^i_j + H \, \delta^i_j
\] 
With this expression we obtain
\begin{align*}
(n-1) \nabla_k H
&= \nabla_i \Adot^i_k + n \Adot^l_k \, \nabla_l f 
\end{align*}
The thesis follows dividing by $n-1$.
\end{proof}

\begin{proof}[Proof of lemma \ref{AnArmLemma}]
Using normal coordinates and the symmetries of the sphere, it is easy to show that for every $0<\epsilon$ there exist $0<R=R(\epsilon)$ such that for every $x \in \esse^n$ the following estimates hold in $B^\sigma_R(x)$:
\begin{equation}\label{SpherEstimates}
\begin{cases}
(1 - \epsilon) dV_\sigma \le dx \le (1+\epsilon)dV_\sigma \\
\abs{g_{ij} - \delta_{ij}} \le \epsilon^2 \\
\abs{\Gamma^k_{ij}} \le \epsilon \\
\abs{D_k g_{ij}} \le \epsilon
\end{cases}
\end{equation}
Using these coordinates we localize the expression for $\gu$. In fact we write in local chart:
\begin{align*}
\divv_\sigma \gf_k 
&= g^{ij} \nabla_i \gf_{jk} = g^{ij} \coup*{ D_i \gf_{jk} + \Gamma^l_{ij} \gf_{lk} + \Gamma^l_{ik} \gf_{jl} } 
\\
&= D_i \coup*{ g^{ij} \gf_{jk} } + g^{ij }\coup*{  \Gamma^l_{ij} \gf_{lk} + \Gamma^l_{ik} \gf_{jl} }   - D_i g^{ij} \gf_{jk} \\
&=: \divv_\delta \tilde{\gf} + O_\epsilon(\gf)
\end{align*}
where we have denoted by $\divv_\delta$ the flat divergence $\divv X_j = D^i X_{ij}$, $\tilde{\gf}$ and by $O_\epsilon(\gf)$ a quantity which satisfies the estimate
\[
\abs{O(\gf)} \le C(\esse^n) \epsilon \abs{\gf}
\]
In summary we have obtained:
\[
D\gu = \divv_\delta \tilde{\gf} + \gf[\gh] + O_\epsilon(\gf) \mbox{ in } B^\sigma_R(x)
\]
We notice that $\tilde{\gf}$ satisfies the estimate $\abs{\tilde{\gf} - \gf} \le \epsilon$. Now we write $\gu = \gv + \gw$, with $\gv$ and $\gw$ satisfying the conditions:
\[
\begin{cases}
\Delta_\delta \gv = \divv_\delta \divv_\delta \tilde{\gf} \\
\restr{\gv}{\partial B_R^\sigma(x)} = \restr{\gu}{\partial B_R^\sigma(x)}
\end{cases}
\]
and 
\[
\begin{cases}
\Delta_\delta \gw = \divv \coup*{ \, \gf[h] + O_\epsilon(\gf) \, } \\
\restr{\gw}{\partial B_R^\sigma(x)} = 0
\end{cases}
\]
where $\Delta_\delta$ is the flat laplacian. 
The first system is studied in \cite{Daniel} where the author proves the existence of a real number $\lambda $ such that the following estimate holds: 
\[
\norm{\gv - \lambda(x)}_{L^p_\delta(B_{\faktor{R}{4}}(x))} \approxle_{n, \, p} \norm{\gf}_{L^p_\delta(B_R(x))}
\]
The second system is well known. In \cite{Ambrosio} it is shown the inequality:
\[
\norm{\gw}_{L^p_\delta(B_R(x))} \approxle_{n, \, p} \norm{\gf[h] + O_\epsilon(\gf)}_{L^p_\delta(B_R(x))} \approxle \coup*{\norm{h}_\infty + \epsilon} \, \norm{\gf}_{L^p_\delta(B_R(x))} 
\]
We patch together the two estimates. Choosing $\epsilon$ sufficiently small, say $\epsilon = \frac{1}{2}$, we find a radius $R$ and a constant $0<C= C(n, \, p)$ such that for every $x \in \esse^n$ there exists $\lambda(x) \in \erre$ which satisfies the estimate:
\begin{equation}\label{Local}
\norm*{\gu - \lambda(x) }_{L^p_\sigma\coup{B^\sigma_{\faktor{R}{4}}(x)}} \le C (1 + \norm{\nabla f}_\infty) \norm{\Adot}_{L^p_\sigma(B^\sigma_R(x))}
\end{equation}
Now we have to make this estimate global. We follow a technique used in \cite{Daniel} and prove the following lemma.
\begin{lemma}
Suppose $\gu \in C^\infty(\esse^n)$ has the following property. There is a radius $\rho$ such that for every $x \in \esse^n$ the local estimate is satisfied:
\begin{equation}\label{LocalDue}
\norm{\gu - \lambda(x) }_{L^p_\sigma(B_{r}(x))} \le  \beta
\end{equation}
where $\lambda(x)$ is a real number depending on $x$, $r \le 2\rho$ and $\beta$ does not depend on $x$. Then $\gu$ satisfies the global estimate:
\[
\norm{\gu - \lambda }_{L^p_\sigma(\esse^n)} \le C \beta
\]
where $\lambda \in \erre$ and $C = C(n, \, p, \, \rho)$ is a positive constant. 
\end{lemma}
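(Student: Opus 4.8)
The plan is to promote the local $L^p$-estimate \eqref{LocalDue} to a global one by a covering argument, using the sphere's compactness to control how the local reference constants $\lambda(x)$ can drift from one ball to the next. First I would fix a finite cover of $\esse^n$ by balls $\{B_r(x_i)\}_{i=1}^N$ with $r \le 2\rho$, chosen so that consecutive balls in the cover overlap on a set of uniformly positive measure; since $\esse^n$ is a fixed compact manifold and $\rho$ is given, both $N$ and the overlap volume depend only on $n$ and $\rho$. On each $B_r(x_i)$ we have $\norm{\gu - \lambda(x_i)}_{L^p_\sigma(B_r(x_i))} \le \beta$, so the content of the argument is to show that all the $\lambda(x_i)$ are within $O(\beta)$ of a single number $\lambda$.

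The key step is the chaining estimate: if $B_r(x_i) \cap B_r(x_j)$ has $\sigma$-measure at least $m > 0$, then on that intersection
\[
m^{1/p} \abs{\lambda(x_i) - \lambda(x_j)} = \norm{\lambda(x_i) - \lambda(x_j)}_{L^p_\sigma(B_r(x_i) \cap B_r(x_j))} \le \norm{\gu - \lambda(x_i)}_{L^p} + \norm{\gu - \lambda(x_j)}_{L^p} \le 2\beta,
\]
so $\abs{\lambda(x_i) - \lambda(x_j)} \le 2 m^{-1/p} \beta$. Because the nerve of the cover is connected (the sphere is connected), any two indices are joined by a chain of at most $N$ overlapping balls, whence $\abs{\lambda(x_i) - \lambda(x_j)} \le 2 N m^{-1/p} \beta$ for all $i, j$. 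Setting $\lambda := \lambda(x_1)$, the triangle inequality gives $\abs{\lambda(x_i) - \lambda} \le 2 N m^{-1/p} \beta$ for every $i$.

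Finally I would assemble the global bound: using that the $B_r(x_i)$ cover $\esse^n$,
\[
\norm{\gu - \lambda}_{L^p_\sigma(\esse^n)}^p \le \sum_{i=1}^N \norm{\gu - \lambda}_{L^p_\sigma(B_r(x_i))}^p \le \sum_{i=1}^N \coup*{ \norm{\gu - \lambda(x_i)}_{L^p_\sigma(B_r(x_i))} + \abs{\lambda(x_i) - \lambda} \, \Vol_n(\esse^n)^{1/p} }^p,
\]
and each summand is bounded by $\coup{\beta + 2 N m^{-1/p}\beta \, \Vol_n(\esse^n)^{1/p}}^p$, so $\norm{\gu - \lambda}_{L^p_\sigma(\esse^n)} \le C\beta$ with $C = C(n, p, \rho)$ absorbing $N$, $m$ and $\Vol_n(\esse^n)$. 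I expect the only genuine subtlety to be the bookkeeping in choosing the cover so that the pairwise overlaps have a uniform lower volume bound $m = m(n, \rho) > 0$ — this is where one really uses that $r \le 2\rho$ together with a fixed covering of $\esse^n$ at scale $\rho$; everything else is the triangle inequality and the connectedness of the nerve. Once applied with $r = R/4$, $\rho = R/8$ (say), and $\beta = C(1 + \norm{\nabla f}_\infty)\norm{\Adot}_{L^p_\sigma(\esse^n)}$ via \eqref{Local}, this yields the global statement needed to close the proof of Proposition \ref{EstimateFirstOrder}.
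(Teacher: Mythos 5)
Your proof is correct and follows essentially the same route as the paper: a finite cover of $\esse^n$ at scale comparable to $\rho$, a chaining estimate on overlaps of uniformly positive volume to show all the local constants $\lambda(x_i)$ agree up to $O(N\,\Vol_n(B_\rho)^{-1/p}\beta)$, and a final triangle-inequality summation over the cover. The only cosmetic difference is your choice $\lambda := \lambda(x_1)$ where the paper takes any $\lambda \in [\lambda_{\min}, \lambda_{\max}]$; both work, and your bookkeeping of the volume factors is in fact cleaner than the paper's.
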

\begin{proof}
We choose a finite covering of balls $\set{( B_j, \, \lambda_j)}_{j=1}^N$ which satisfies the following properties. Every ball $B_j$ has radius $2\rho$, estimate \eqref{LocalDue} holds with $\lambda_j$, and for every $j$, $k$ there exists a ball of radius $\rho$ contained in $B_j \cap B_k$.  Therefore, given two balls $B_j$ and $B_k$ whose intersection is non empty, we have: 
\begin{align*}
\abs{\lambda_j - \lambda_k} 
&= \frac{1}{\Vol_n(B_j \cap B_k)^{\frac{1}{p}}} \norm{\lambda_j - \lambda_k}_{L^p_\sigma(B_j \cap B_k)} \\ 
&= \frac{1}{\Vol_n(B_j \cap B_k)^{\frac{1}{p}}} \norm{\lambda_j - \gu + \gu - \lambda_k}_{L^p_\sigma(B_j \cap B_k)} \\
& \le \frac{1}{\Vol_n(B_j \cap B_k)^{\frac{1}{p}}} \coup*{ \norm{ \gu - \lambda_k}_{L^p_\sigma(B_j \cap B_k)} + \norm{\gu - \lambda_k}_{L^p_\sigma(B_j \cap B_k)} } \\
&\le \frac{2 \, \beta}{\Vol_n(B_j \cap B_k)^{\frac{1}{p}}}
\end{align*}
Using the properties of the covering we obtain
\[
\abs{\lambda_j - \lambda_k} \le 2 \Vol_n(B_\rho)^{- \frac{1}{p}} \beta
\]
The volume of the ball $B_\rho$ does not depend on the center because of the symmetry of the sphere. Define $\lambda_{\min} := \min_{1 \le j \le n} \lambda_j$ and  $ \lambda_{\max} := \max_{1 \le j \le n} \lambda_j$. Consider a path joining the ball in the cover with $\lambda_{\min}$ to the one with $\lambda_{\max}$. Since this path can cross at most $N$ different balls, we obtain 
\[
\abs{\lambda_{\max} - \lambda_{\min}} \le 2 N  \Vol_n(B_\rho)^{- \frac{1}{p}}  \beta = C(n, \, p, \, \rho) \, \beta
\]
For every $\lambda_{\min} \le \lambda \le  \lambda_{\max}$ we have
\begin{align*}
\norm{\gu - \lambda}_{L^p_\sigma(\esse^n)}
&\le \sum_{j=1}^N \norm{\gu - \lambda}_{L^p_\sigma(B_j)} \\
&\le \sum_{j=1}^N \norm{\gu - \lambda_j + \lambda_j - \lambda}_{L^p_\sigma(B_j)} \\
&\le \sum_{j=1}^N \norm{\gu - \lambda_j}_{L^p_\sigma(B_j)} + \abs{\lambda_j - \lambda} \Vol_n(B_j)^{-\frac{1}{p}} \\
&\le \sum_{j=1}^N \norm{\gu - \lambda_j}_{L^p_\sigma(B_j)} + \abs{\lambda_{\max} - \lambda_{\min}} \Vol_n(B_j)^{-\frac{1}{p}} \\
&\le C_2(n, \, p, \, \rho) \, \beta
\end{align*}
and the lemma follows. 
\end{proof}
We apply this lemma with $\beta= \norm{\Adot}_{L^p_\sigma}$ and find the thesis.
\end{proof}

\subsection{Proof of proposition \ref{ApproxTwo}}
We prove here proposition \ref{ApproxTwo}. We start with the $L^{\infty}$-bound.
\subsubsection{Bound for $f$}
Before starting the proof we recall the a proposition proved in \cite{Daniel} which shows how the smallness of $\norm{\Adot}_p$ implies the nearness of $\Sigma$ to a sphere.\footnote{Actually, the statement in \cite{Daniel} is different. However, as the author points out in (\cite{Daniel}, p.$22$), it is easy to see that assuming $\Sigma$ to be the boundary of a convex set leads to this formulation. }:
\begin{prop}\label{Near}
Let $n \ge 2$, $p \in (1, \, +\infty)$ be given, and let $\Sigma^n \subset \erre^{n+1}$ be a $n$-dimensional, closed hypersurface with induced Riemannian metric $g$, satisfying
 \begin{itemize}
\item$\Sigma=\partial U$ where $U$ is an open, convex set.
\item $\Vol_n(\Sigma) = \Vol_n(\esse^n)$
\end{itemize}
For every $\epsilon>0$ there exist $\delta>0$ depending on $n$, $p$, $\epsilon$, such that
\begin{equation}
\norm{\Adot}_{L^p_g(\Sigma)} \le \delta \Rightarrow  d_{\hd}(\Sigma, \, \esse^n(c)) \le \epsilon \mbox{ for some } c \in U
\end{equation}
where $d_{\hd}$ is the Hausdorff distance between two sets, that is 
\[
d_{\hd}(A, \, B):= \max \set{ \sup_{x \in A} d(x, \, B), \, \sup_{y \in B} d(y, A)  }
\]
\end{prop}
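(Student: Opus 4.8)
The plan is to argue by contradiction using a compactness argument for convex bodies. Suppose the conclusion fails: there are $\epsilon_0>0$ and a sequence of smooth, closed hypersurfaces $\Sigma_k=\partial U_k$ with $U_k$ open and convex, $\Vol_n(\Sigma_k)=\Vol_n(\esse^n)$, and $\norm{\Adot}_{L^p_g(\Sigma_k)}\to 0$, but with $d_{\hd}(\Sigma_k,\esse^n(c))>\epsilon_0$ for every $c\in\erre^{n+1}$. After a translation we may assume the incenter of $U_k$ is the origin.

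The first step is to show that the bodies $U_k$ are uniformly non-degenerate, i.e.\ that there are $0<r_0<R_0$, depending only on $n$ and $p$, with $B_{r_0}(0)\subset U_k\subset B_{R_0}(0)$ for all $k$. The lower bound on the inradius is where the smallness of $\norm{\Adot}_p$ enters: if the inradius collapsed, then $\partial U_k$ would contain a portion lying near a ``thin'' direction on which the principal curvatures become strongly anisotropic, and a direct estimate shows that $\int_{\Sigma_k}\abs{\Adot}^p\,dV_g$ would then stay bounded away from zero (for $p>1$ the high-curvature rim contributes a term of order $(\text{thickness})^{1-p}\to\infty$), contradicting $\norm{\Adot}_{L^p_g(\Sigma_k)}\to 0$. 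The upper bound $R_0$ then follows from the surface-area normalization together with the monotonicity of surface area under inclusion of convex bodies. With these bounds in hand, the Blaschke selection theorem produces a subsequence (not relabeled) with $\overline{U_k}\to\overline{U_\infty}$ in the Hausdorff metric, where $U_\infty$ is a convex body with non-empty interior; since the limit is non-degenerate, the surface-area functional is continuous along this convergence, so $\Vol_n(\partial U_\infty)=\Vol_n(\esse^n)$.

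The heart of the matter, and the step I expect to be the main obstacle, is to prove that $\partial U_\infty$ is the round unit sphere. One must pass the a.e.\ umbilicity encoded by $\norm{\Adot}_{L^p_g(\Sigma_k)}\to 0$ to the limit. The natural route is a lower-semicontinuity statement: for uniformly non-degenerate convex bodies curvature cannot concentrate (the surface area being controlled) and hence cannot be lost in a Hausdorff limit, so that $\liminf_k\norm{\Adot}_{L^p_g(\Sigma_k)}$ bounds from above the corresponding quantity for $\partial U_\infty$, forcing $\Adot\equiv 0$ on $\partial U_\infty$ in the appropriate weak sense --- equivalently, the surface-area measure of $U_\infty$ is a constant multiple of that of the unit ball. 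By the classical Minkowski--Alexandrov characterization of the ball among convex bodies, $U_\infty$ is a ball, and by the area normalization a unit ball, say centered at $c_\infty$. Then $d_{\hd}(\Sigma_k,\esse^n(c_\infty))\le\epsilon_0$ and $c_\infty\in U_k$ for $k$ large, contradicting the choice of the sequence.

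Alternatively --- and this is presumably what the footnote's appeal to \cite{Daniel} refers to --- one can bypass compactness entirely: granting a quantitative estimate of the form ``$\norm{\Adot}_{L^p_g(\Sigma)}\le\delta$ implies $\Sigma$ is close, in a norm that embeds into $C^0$, to some sphere $\esse^n(c,r)$'', the proposition follows by reading off that estimate, observing that $\Vol_n(\Sigma)=\Vol_n(\esse^n)$ forces $\abs{r-1}$ to be of order $o_\delta(1)$ by continuity of the area functional, noting that the center $c$ lies in the interior of $U$ once $\delta$ is small, and then replacing $\esse^n(c,r)$ by $\esse^n(c,1)$ at the cost of enlarging the error by $\abs{r-1}$. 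In either approach the one genuinely delicate point is controlling the curvature in the limit, i.e.\ ruling out that near-umbilicity of $\Sigma$ is an artifact which disappears under the very degeneration one is trying to exclude.
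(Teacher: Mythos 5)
The paper does not actually prove Proposition \ref{Near}: it is imported from \cite{Daniel}, with a footnote explaining that the (different) statement proved there reduces to this formulation once one assumes that $\Sigma$ bounds a convex set. Your second, ``alternative'' paragraph --- quote the quantitative closeness-to-a-sphere estimate of \cite{Daniel}, use the area normalization to force the radius toward $1$, and absorb $\abs{r-1}$ into the error --- is therefore precisely the paper's treatment, and on that route there is nothing to compare.

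Your first, self-contained compactness argument is a genuinely different strategy, but both of its key steps are heuristics that conceal exactly the difficulty the paper outsources. (i) \emph{The inradius lower bound.} The computation ``the rim contributes $(\text{thickness})^{1-p}$'' is a model computation for a smooth slab; for an arbitrary degenerating sequence of convex bodies with normalized surface area one needs a genuine quantitative statement, e.g.\ obtained by combining the degree-one property of the Gauss map (which gives $\int_\Sigma \det A \, dV_g = \Vol_n(\esse^n)$) with the smallness of $\Adot$ to bound the geometry from below. Note also that monotonicity of surface area under inclusion only bounds the \emph{inradius} from above; the circumradius bound additionally requires the inradius lower bound already in hand (a cone over the inscribed ball with apex at a far point has large area), so the logical order of the two bounds matters. (ii) \emph{The limit identification.} The Hausdorff limit $U_\infty$ is a priori only a convex body, so ``$\Adot \equiv 0$ on $\partial U_\infty$'' has no pointwise meaning, and the assertion that curvature ``cannot concentrate'' for uniformly non-degenerate convex bodies is false: nearly polytopal bodies concentrate all curvature near their edges while keeping area and mean width bounded. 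What actually rules out concentration is again the smallness of $\norm{\Adot}_{L^p}$ with $p>1$ (a single large principal curvature forces anisotropy, hence large $\abs{\Adot}$), and converting this into the statement that the surface-area measure of $U_\infty$ is that of a ball is a substantial argument rather than a routine lower-semicontinuity fact. In short, the compactness route is viable in outline, but as written it establishes the proposition only modulo the very content of \cite{Daniel} that the paper cites.
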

Using \ref{Near} we immediately deduce our $L^\infty$-nearness.
\begin{proof}
We assume $c=0 $ without loss of generality. By proposition \ref{Near}, for every $\epsilon>0$ there exists $\delta=\delta(n , \, p, \, \epsilon)>0$ such that
\[
\norm{\Adot}_{L^p_g(\Sigma)} \le \delta \Rightarrow d_{\hd}(\Sigma, \, \esse^n) \le \epsilon 
\]
Let us assume $\norm{\Adot}_p \le \delta$: this immediately implies
\begin{equation}\label{rho}
\abs{\rho(x) - 1} = \abs{\rho(x) \, x - x} \le d_{\hd} \coup*{\esse^n, \, \Sigma} \le \epsilon
\end{equation}
By equation \eqref{rho} we obtain
\[
\abs*{f(x) \int_0^1 \rho(x)^t \, dt} \le \epsilon
\]
Now using the estimate on $\rho$ we have
\[
\abs*{f(x) \int_0^1 (1 - \epsilon)^t \, dt} \le \epsilon
\]
and solving the integral
\[
\abs{f(x)} \le \abs{\log (1 - \epsilon)} \le \epsilon
\]
which is the thesis.
\end{proof}
\subsubsection{Bound for $\nabla f$} 
\begin{proof}
This bound actually does not depend on any other assumptions on $\Sigma$ except than the oscillation of $f$. In fact, it follows immediately by the following lemma:
\begin{lemma}
Let $\Sigma$ be a convex, radially parametrized hypersurface in $\erre^{n+1}$. The following inequality holds: 
\[
\norm{\nabla f}^2_\infty \le 2 \osc(f) \coup*{1 + \norm{\nabla f}^2_\infty}
\]
In particular if $\osc(f) < \frac{1}{2}$ we find the estimate:
\begin{equation}
\norm{\nabla f}_\infty \le \sqrt{\frac{\osc (f)}{1 - 2 \osc(f)}}
\end{equation}
\end{lemma}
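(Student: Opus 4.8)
The plan is to extract convexity in its crudest geometric form and then feed it the best possible test point. Because $\Sigma=\partial U$ is radially parametrized, $U$ is star-shaped about the origin, and convexity upgrades this to the statement that the whole body $\overline U$ lies on the inner side of every tangent hyperplane. I would fix $x\in\esse^n$, read off the outward unit normal at $\psi(x)=e^{f(x)}x$ from \eqref{norm}, namely $\ni(x)=\coup*{1+\abs{\nabla f(x)}^2}^{-1/2}\coup*{x-\grad_\sigma f(x)}$, and observe that for every $y\in\esse^n$ the point $\psi(y)=e^{f(y)}y$ belongs to $\overline U$, hence
\[
\langle \psi(y)-\psi(x),\, \ni(x)\rangle \le 0 .
\]

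Next I would expand this. Dropping the positive prefactor and using $\langle x,x\rangle=1$ together with $\langle x,\grad_\sigma f(x)\rangle=0$ (the $\sigma$-gradient is tangent to the sphere), the inequality collapses to $\langle y,\, x-\grad_\sigma f(x)\rangle \le e^{f(x)-f(y)}$ for all $y\in\esse^n$. The key move is then to take the extremal test point $y_0:=\coup*{x-\grad_\sigma f(x)}/\abs{x-\grad_\sigma f(x)}\in\esse^n$: the left-hand side becomes exactly $\abs{x-\grad_\sigma f(x)}=\sqrt{1+\abs{\nabla f(x)}^2}$, while the right-hand side is $\le e^{\osc(f)}$. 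Passing to the supremum over $x\in\esse^n$ yields the clean pointwise bound $1+\norm{\nabla f}_\infty^2\le e^{2\,\osc(f)}$.

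From there the lemma is pure calculus: since $e^{-2\,\osc(f)}\ge 1-2\,\osc(f)$, the last bound gives $\dfrac{1}{1+\norm{\nabla f}_\infty^2}\ge 1-2\,\osc(f)$, hence
\[
\frac{\norm{\nabla f}_\infty^2}{1+\norm{\nabla f}_\infty^2}=1-\frac{1}{1+\norm{\nabla f}_\infty^2}\le 2\,\osc(f),
\]
which is the first asserted inequality; and when $\osc(f)<\tfrac12$ the coefficient $1-2\,\osc(f)$ is positive, so one simply solves this inequality for $\norm{\nabla f}_\infty$ to obtain the stated explicit estimate. I do not expect any real difficulty here; the one spot deserving attention is the very first step — checking that $\ni$ in \eqref{norm} is the \emph{outward} normal, so that the supporting-hyperplane inequality has the right sign, and verifying that convexity genuinely allows $\psi(y)$ to range over all of $\Sigma$, in particular over the extremal point $\psi(y_0)$. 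Everything after that is bookkeeping.
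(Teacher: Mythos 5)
Your argument is correct, but it takes a genuinely different route from the paper's. The paper works infinitesimally: it translates convexity ($A\ge 0$) through Lemma \ref{Computations} into the Hessian inequality $\nabla^2 f\le \sigma+\nabla f\otimes\nabla f$, Taylor-expands $f$ along a geodesic issuing from a point where $\abs{\nabla f}$ attains its maximum, and then optimizes over the geodesic step $\tau$ to arrive at $\norm{\nabla f}_\infty^2\le 2\osc(f)\coup{1+\norm{\nabla f}_\infty^2}$. You instead use convexity globally, via the supporting-hyperplane inequality $( \psi(y)-\psi(x),\,\ni(x))\le 0$, tested at the single extremal point $y_0=(x-\grad_\sigma f(x))/\abs{x-\grad_\sigma f(x)}$. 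Your two flagged worry-points are indeed fine: the normal in \eqref{norm} has radial component $+1>0$ and $U$ is star-shaped about the origin, so it is the outward normal, and the tangent hyperplane of a smooth convex boundary supports all of $\overline U\supset\Sigma$, so every $\psi(y)$, including $\psi(y_0)$, is an admissible test point. What your approach buys is a strictly stronger intermediate bound, $1+\norm{\nabla f}_\infty^2\le e^{2\osc(f)}$, which controls the gradient for \emph{every} value of $\osc(f)$ with no threshold, and from which the paper's first inequality follows by $e^{-t}\ge 1-t$ exactly as you indicate; the paper's Taylor/optimization argument only ever produces the weaker implicit inequality. One shared cosmetic remark: solving $\norm{\nabla f}_\infty^2\le 2\osc(f)\coup{1+\norm{\nabla f}_\infty^2}$ gives $\norm{\nabla f}_\infty\le\sqrt{2\osc(f)/(1-2\osc(f))}$, so the displayed second estimate is missing a factor $2$ in the numerator; this discrepancy is already present in the paper's own derivation and is harmless for the way the lemma is used, but you should not claim that the stated constant follows verbatim from the first inequality.
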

\begin{proof}
In \cite{GHL} it is shown that a hypersurface is the boundary of a convex, open set iff its second fundamental form satisfies the inequality $A \ge 0$. By lemma \ref{Computations} we obtain that $\Sigma$ is convex iff $f$ satisfies the inequality 
\begin{equation}\label{Convexityff}
\nabla^2 f \le \sigma + \nabla f \otimes  \nabla f
\end{equation}
Consider a point $x_0 \in \esse^n$, and an unit vector $\xi$ in $ T_x \esse^n$ which satisfies $\coup*{ \nabla f(x_0), \, \xi } = - \norm{\nabla f}_\infty$. Setting $x_\tau = \exp_{x_0}(\tau \xi)$ the lemma follows by the simple equality
\[
f(x_\tau) - f(x_0) = \coup*{\nabla f(x_0), \, \tau \xi} + \int_0^1 t \int_0^1 \nabla^2 f (\gamma(st))[\dot{\gamma}(st), \, \dot{\gamma}(st)] \, dsdt
\]
where $\gamma \daA{[0, \, 1]}{\esse^n}$ is the geodesic which connects $x_0$ and $x_\tau$. Applying \eqref{Convexityff} we find
\begin{align*}
f(x_\tau) - f(x_0) 
&\le \coup*{\nabla f(x_0), \, \tau \xi} + \frac{\tau^2}{2} \coup*{1 + \norm{\nabla f}^2_\infty} \\
&= - \tau \norm{\nabla f}_\infty + \frac{\tau^2}{2} \coup*{1 + \norm{\nabla f}^2_\infty}
\end{align*}
Finally we obtain the inequality
\[
\norm{\nabla f}_\infty \le \frac{\osc(f)}{\tau} + \frac{\tau}{2} \coup*{1 + \norm{\nabla f}^2_\infty} \mbox{ for every } \tau>0
\]
Choosing $\tau= \sqrt{2 \, \osc(f) \, (1 + \norm{\nabla f}_\infty}$ we obtain the result.
\end{proof}
Now that we have proven this result, we notice that for $0<\epsilon < \frac{1}{4}$ we find our thesis.
\end{proof}

\subsection{Proof of proposition \ref{MainEstimate}}
We deal with proposition \ref{MainEstimate}. The idea of the proof is to see proposition \ref{EstimateFirstOrder} as a nonlinear version of an elliptic estimate and then linearise it. For simplicity, we have split the proof in two parts.
\subsubsection{Linearisation}
In this section we prove the following proposition. 
\begin{prop}
Let $0<\epsilon<\frac{1}{4}$ be given, and let $\delta$ be as in proposition \ref{ApproxTwo}. If $\Sigma$ is a $\delta$-admissible, radially parametrized sphere, then the following estimate is true:
\begin{equation}\label{Linearised}
\norm{\Delta f + n f  }_{L^p_\sigma} \approxle_{n, \, p} \norm{\Adot}_{L^p_g} + \sqrt{\epsilon}\norm{f}_{W^{2, \, p}_\sigma}
\end{equation}
\end{prop}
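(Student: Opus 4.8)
The plan is to start from Proposition \ref{EstimateFirstOrder}, which gives $\lambda_0 \in \erre$ with $\norm{A - \lambda_0 g}_{L^p_g} \approxle_{n,p} \norm{\Adot}_{L^p_g}$ once the geometric quantities $\osc(f)$ and $\norm{\nabla f}_\infty$ are controlled — and by Proposition \ref{ApproxTwo} these are bounded by $\epsilon$ and $2\sqrt{\epsilon}$ respectively, so $C(n,p,\osc(f),\norm{\nabla f}_\infty)$ in \eqref{ConstantArm} is uniformly bounded for $\epsilon < \tfrac14$. First I would rewrite $A - \lambda_0 g$ in the radial parametrization using the formulas \eqref{Aff} and \eqref{g} from Lemma \ref{Computations}. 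Up to the positive scalar factor $e^f/\sqrt{1+\abs{\nabla f}^2}$, the tensor $A$ has components $\sigma_{ij} + \nabla_i f\,\nabla_j f - \nabla^2_{ij} f$, while $g$ has components $e^{2f}(\sigma_{ij} + \nabla_i f\,\nabla_j f)$. The key is that all the \emph{nonlinear} terms — $\nabla_i f\,\nabla_j f$, the deviation of $e^f$, $\sqrt{1+\abs{\nabla f}^2}$ from $1$, and any factor $\lambda_0 - 1$ times these — are quadratically small: each is pointwise bounded by a constant times $\sqrt{\epsilon}$ times $\abs{\nabla^2 f}$, $\abs{\nabla f}$, $\abs{f}$, or a constant, using \eqref{ImportantApproxUno}, \eqref{ImportantApproxDue}. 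So modulo an error whose $L^p_\sigma$-norm is $\approxle_{n,p}\sqrt{\epsilon}\,\norm{f}_{W^{2,p}_\sigma}$ (plus possibly $\sqrt{\epsilon}$ times lower-order norms, all absorbed into $\norm{f}_{W^{2,p}_\sigma}$ via Poincaré-type inequalities on $\esse^n$), the estimate $\norm{A-\lambda_0 g}_{L^p_g}\approxle\norm{\Adot}_{L^p_g}$ linearises to
\[
\norm{(1 - \lambda_0)\sigma_{ij} - \nabla^2_{ij} f - (\lambda_0 - 1)\,2f\,\sigma_{ij} + O(\sqrt{\epsilon})}_{L^p_\sigma} \approxle_{n,p} \norm{\Adot}_{L^p_g},
\]
where I have also linearised $e^{2f} \approx 1 + 2f$. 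Wait — I should be more careful: the clean linear operator I expect to extract is $\nabla^2_{ij} f + (\lambda_0 - 1)(1 + 2f)\sigma_{ij} \approx 0$ up to $\Adot$ and the $\sqrt{\epsilon}$-error; I will track the exact bookkeeping in the computation.

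Next I would take the trace with respect to $\sigma$. Tracing the linearised tensor identity gives $\Delta f + n(\lambda_0 - 1) + O(\sqrt{\epsilon})$ controlled in $L^p_\sigma$ by $\norm{\Adot}_{L^p_g} + \sqrt{\epsilon}\norm{f}_{W^{2,p}_\sigma}$ — note the traced terms like $\sigma^{ij}\nabla_i f\,\nabla_j f = \abs{\nabla f}^2 \le 4\epsilon$ are themselves of the right error size. This is \emph{almost} \eqref{Linearised}, except I have $n(\lambda_0 - 1)$ in place of $nf$. To fix this I need a second relation: I must show that $\lambda_0 - 1 + f$ is small in the appropriate sense, i.e. that $\lambda_0$ is essentially $1 - \bar f$ and the remaining discrepancy feeds into the operator correctly. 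Actually the cleanest route is to combine the two: from $\Delta f \approx -n(\lambda_0 - 1)$ one sees $\lambda_0 - 1$ is determined, and one uses the volume normalisation $\Vol_n(\Sigma) = \Vol_n(\esse^n)$ from $\delta$-admissibility — expanding \eqref{Volume}, $\int_{\esse^n} e^{nf}\sqrt{1+\abs{\nabla f}^2}\,dV_\sigma = \Vol_n(\esse^n)$ forces $\fint f = O(\epsilon + \sqrt\epsilon\,\norm{f}_{W^{2,p}})$ — to pin down the constant. Combining these gives $\lambda_0 - 1 = -\fint f + (\text{small})$; then $n(\lambda_0 - 1) = -n\fint f$, and since $\Delta(f - \fint f) = \Delta f$, replacing $f$ by $f - \fint f$ throughout (which changes nothing on the left of \eqref{Linearised} beyond absorbed errors) converts $\Delta f + n(\lambda_0 - 1)$ into $\Delta f + nf$ up to the stated error.

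The main obstacle I anticipate is the bookkeeping of the linearisation error — specifically, verifying that \emph{every} nonlinear term really is bounded by $\sqrt\epsilon$ times a quantity controlled by $\norm{f}_{W^{2,p}_\sigma}$, and that no term of size $\epsilon$ times $\norm{\nabla^2 f}_{L^p}$ escapes being paired with a genuinely small prefactor; the factor $\sqrt{\epsilon}$ (rather than $\epsilon$) in \eqref{Linearised} is exactly what the bound $\norm{\nabla f}_\infty \le 2\sqrt\epsilon$ forces, since terms like $\nabla^i f\,\nabla^2 f[\nabla f]_j$ carry two gradient factors. A secondary subtlety is handling the conversion between $L^p_g$ and $L^p_\sigma$ norms (the density $dV_g/dV_\sigma = e^{nf}\sqrt{1+\abs{\nabla f}^2}$ is within $1 \pm C\epsilon$ of $1$, so this only costs a harmless constant) and ensuring $\norm{\Adot}_{L^p_g}$ can be left as-is on the right-hand side rather than converted — which is fine since the statement \eqref{Linearised} mixes the two norms deliberately. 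Once the error bookkeeping is organised, tracing and the volume-normalisation argument to replace $\lambda_0 - 1$ by $-\fint f$ are routine, and the proposition follows.
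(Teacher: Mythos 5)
Your overall architecture is the paper's: linearise the nonlinear estimate of Proposition \ref{EstimateFirstOrder} using the $W^{1,\infty}$-smallness from Proposition \ref{ApproxTwo}, use the volume normalisation $\Vol_n(\Sigma)=\Vol_n(\esse^n)$ to control the constant, and trace to produce $\Delta f + nf$. (The paper pins the constant down slightly differently: via Corollary \ref{FirstMean} it takes $\lambda=\overline{H}$, computes $\overline{H}=1-(n-1)\fint f + O_{\sqrt\epsilon}(\norm{f}_{2,p})$ explicitly, and kills $\fint f$ by the volume condition; your plan of keeping an abstract $\lambda_0$ and recovering $\lambda_0-1\approx-\fint f$ by integrating the traced identity is an acceptable equivalent.)

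However, there is a concrete error in your linearisation that breaks the final step. Expanding \eqref{Aff} with $e^f\approx 1+f$ gives $A_{ij}=\sigma_{ij}+f\sigma_{ij}-\nabla^2_{ij}f+O_{\sqrt\epsilon}(\norm{f}_{2,p})$ (this is \eqref{SimpleAff} in the paper); you dropped the $f\sigma_{ij}$ term. Consequently your linearised tensor $(1-\lambda_0)\sigma_{ij}-\nabla^2_{ij}f-(\lambda_0-1)2f\sigma_{ij}$ has zeroth-order $f$-coefficient $2(1-\lambda_0)\approx 0$ instead of the correct $(1-2\lambda_0)\approx -1$, and your trace yields $\Delta f + n(\lambda_0-1)$ instead of the correct $\Delta f + nf + n(\lambda_0-1)$. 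The $nf$ in \eqref{Linearised} comes precisely from the missing $f\sigma_{ij}$ combined with the $-2\lambda_0 f\sigma_{ij}$ from $\lambda_0 g$; it cannot be manufactured afterwards from the \emph{constant} $n(\lambda_0-1)$, and your closing manoeuvre (identifying $\lambda_0-1$ with $-\fint f$ and "replacing $f$ by $f-\fint f$") only produces $\Delta f - n\fint f$, which differs from $\Delta f+nf$ by the non-negligible function $nf$. With the corrected expansion the rest of your plan does go through: integrating the traced identity gives $n\fint f + n(\lambda_0-1)=O\coup{\norm{\Adot}_{L^p_g}+\sqrt\epsilon\norm{f}_{2,p}}$, the volume condition gives $\fint f=O_{\sqrt\epsilon}(\norm{f}_{1,p})$, hence $|\lambda_0-1|$ is absorbable and \eqref{Linearised} follows. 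Your error bookkeeping for the genuinely nonlinear terms ($\nabla f\otimes\nabla f$, the $\sqrt{1+|\nabla f|^2}$ factor, $e^f-1-f$, the $L^p_g$ versus $L^p_\sigma$ conversion) is correct and matches the paper.
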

\begin{proof}
We need the following corollary of theorem \ref{EstimateFirstOrder} , which will be proved in the last section of this article.
\begin{cor}\label{FirstMean}
The following inequality holds:
\[
\norm{A - \overline{H} \, g}_{L^p_g(\Sigma)} \le (n+1) \,  \min_{\lambda \in \erre} \norm{A - \lambda \, g}_{L^p_g(\Sigma)}
\]
where  $\overline{H}$ is the average of $H$ 
\[
\overline{H} := \fint_{\esse^n} H \, dV_g
\]
\end{cor}
This result give us the following estimate:
\[
\norm{A - \overline{H} \, g}_{L^p_g(\Sigma)} \le C \norm{\Adot}_{L^p_g(\Sigma)}
\]
where $C$ is given by \eqref{ConstantArm}. We notice that \textit{a priori} $C$ depends on $n$, $p$, $\osc(f)$ and $\nabla f$, but thanks to proposition \ref{ApproxTwo} we can remove the dependences on $\osc(f)$ and $\nabla f$ by using the $W^{1, \, \infty}$-smallness.  Therefore we can write:
\begin{equation}\label{MeanEst}
\norm{A - \overline{H} \, g}_{L^p_g(\Sigma)} \approxle_{n, \, p}  \norm{\Adot}_{L^p_g(\Sigma)}
\end{equation}
Let $0<\epsilon<\frac{1}{4}$ and $\delta$ be chosen so by proposition \ref{ApproxTwo} inequalities \eqref{ImportantApproxUno}, \eqref{ImportantApproxDue} are true. We will show how these two inequalities lead to the conclusion. 
Before entering in the details, we need to fix a notation: we will write $O_{\epsilon^\gamma}(\norm{f}_{k, \, p})$ to denote a quantity which satisfies the estimate:
\[
\norm{O_{\epsilon^\gamma} (\norm{f}_{k, \, p})}_{k, \, p} \approxle_{n, \, p} \epsilon^{\gamma} \norm{f}_{k, \, p}
\]

Now we can start the proof. From \eqref{MeanEst} we obtain 
\[
\norm{A - g}_{L^p_g} \approxle_{n, \, p} \norm{\Adot}_{L^p_g} + \abs{\overline{H} - 1}
\]
We simplify the left hand side. We recall formula \eqref{Aff}:
\[
A_{ij} = \frac{e^f}{\sqrt{1 + \abs{\nabla f}^2}} \coup*{ \sigma_{ij} + \nabla_i f \, \nabla_j f - \nabla^2_{ij} f}
\]
This formula can be strongly simplified using the $W^{1, \, \infty}$-smallness of $f$. Indeed, we have
\[
\frac{1}{\sqrt{1 + \abs{\nabla f}^2}} -1 = \int^1_0 \ddt \frac{1}{\sqrt{1 + t^2\abs{\nabla f}^2}} \, dt = O_{\sqrt{\epsilon}}(\norm{\nabla f}_p)
\]
Therefore we obtain 
\begin{align*}
A_{ij} 
&= e^f  \coup*{ \sigma_{ij} + \nabla_i f \, \nabla_j f - \nabla^2_{ij} f} + O_{\sqrt{\epsilon}}(\norm{\nabla f}_{1, \, p}) \\
&= e^f  \coup*{ \sigma_{ij} - \nabla^2_{ij} f} + O_{\sqrt{\epsilon}}(\norm{\nabla f}_{1, \, p}) 
\end{align*}
We use the same idea for simplifying the exponential: by standard calculus, we find
\[
e^{f} = 1 + f + O_\epsilon (\norm{f}_p)
\]
These simplifications give us the approximated second fundamental form:
\begin{equation}\label{SimpleAff}
A_{ij} = \sigma_{ij} - \nabla^2_{ij} f + f \sigma_{ij} + O_{\sqrt{\epsilon}}(\norm{ f}_{2, \, p})
\end{equation}
With the same ideas we find also the approximated metric
\begin{equation}\label{SimpleG}
g_{ij} = (1 + 2 f+ O_{\sqrt{\epsilon}}(\norm{f}_{1, \, p}) )  \sigma_{ij} 
\end{equation}
and its inverse 
\begin{equation}\label{SimpleGInv}
g^{ij} = (1 - 2 f+ O_{\sqrt{\epsilon}}(\norm{f}_{1, \, p}) )   \sigma^{ij} 
\end{equation}
Via formulas \eqref{SimpleAff} and \eqref{SimpleG} we find the approximated equation:
\[
A - g = - f \sigma - \nabla^2 f + O_{\sqrt{\epsilon}}(\norm{f}_{2, \, p})
\]
We prove that the term $\abs{\overline{H} - 1}$ is negligible. Firstly we show how formulas \eqref{SimpleGInv} and \eqref{SimpleAff} give us approximated expression for the mean curvature. 
\begin{align*}
g^{ij} A_{ij} 
&= (1 - 2f + O_{\sqrt{\epsilon}} (\norm{f}_{1, \, p}) )  \sigma^{ij}  \coup{ \sigma_{ij} - \nabla^2_{ij} f + f \sigma_{ij} + O_{\sqrt{\epsilon}}(\norm{ f}_{2, \, p})} \\
&= n - nf - \Delta f +  O_{\sqrt{\epsilon}}(\norm{f}_{2, \, p})
\end{align*}
We obtain the approximated mean curvature:
\begin{equation}\label{SimpleMean}
H = 1 - f - \frac{1}{n} \Delta f +  O_{\sqrt{\epsilon}}(\norm{f}_{2, \, p})
\end{equation}
Now we approximated $\overline{H}$:
\begin{align*}
\overline{H} &= \fint_{\esse^n} H \, dV_g = \fint_{\esse^n} \coup*{ 1 - f - \frac{1}{n} \Delta f +  O_{\sqrt{\epsilon}}(\norm{f}_{2, \, p}) } e^{nf} \sqrt{1 + \abs{\nabla f}^2} \, dV_\sigma \\
&= \fint_{\esse^n} e^{nf} \coup*{1 - f} dV_\sigma +  O_{\sqrt{\epsilon}}(\norm{f}_{2, \, p}) \\
& = 1 - (n-1) \fint_{\esse^n} f dV_\sigma +  O_{\sqrt{\epsilon}}(\norm{f}_{2, \, p}) 
\end{align*}
We have found the approximated average of the mean curvature:
\begin{equation}\label{SimpleF}
\overline{H}= 1 - (n-1) \fint_{\esse^n} f \, dV_\sigma +  O_{\sqrt{\epsilon}}(\norm{f}_{2, \, p}) 
\end{equation}
We show that the average of $f$ is actually negligible. Indeed, since $\Sigma$ is $\delta$-admissible it satisfies the volume condition
\[
\Vol_n(\Sigma) = \Vol_n(\esse^n)
\]
However, by the volume formula \ref{Volume} the condition means
\[
\fint_{\esse^n} e^{nf} \sqrt{1 + \abs{\nabla f}^2} \, dV_\sigma = 1
\]
With the previous approximations, we find 
\[
0 = \fint_{\esse^n} e^{nf} \sqrt{1 + \abs{\nabla f}^2} \, dV_\sigma - 1 = n \fint_{\esse^n} f dV_\sigma +  O_{\sqrt{\epsilon}}(\norm{f}_{1, \, p}) 
\]
This means 
\[
\fint_{\esse^n} f dV_\sigma =  O_{\sqrt{\epsilon}}(\norm{f}_{1, \, p}) 
\]
Finally we infer 
\begin{equation}\label{SimpleAv}
\abs{\overline{H} - 1} = O_{\sqrt{\epsilon}}(\norm{f}_{2, \, p}) 
\end{equation}
Using formulas \eqref{SimpleAff}, \eqref{SimpleF}, \eqref{SimpleAv} we obtain the approximated version of inequality of \eqref{MeanEst}:
\begin{equation}\label{Linearis}
\norm{\nabla^2 f + f \sigma }_{L^p_\sigma(\esse^n)} \approxle_{n, \, p} \norm{\Adot}_{L^p_\sigma(\Sigma)} + \sqrt{\epsilon}\norm{f}_{W^{2, \, p}_\sigma(\esse^n)}
\end{equation}
We obtain the thesis by simply applying the Cauchy-Schwartz inequality:
\[
\abs{\Delta f + n f} = \abs{\coup*{ \nabla^2 f + f \sigma, \, \sigma }} \le n \abs{ \nabla^2 f + f \sigma}
\]
\end{proof}

 \subsubsection{Conclusion}
We prove the following proposition.
\begin{prop}\label{ObataLike}
Let $f \in C^\infty(\esse^n)$. Then, the following estimate holds:
\begin{equation}
\norm{f - (v_f, \, \cdot)}_{W^{2, \, p}(\esse^n)} \approxle_{n, \, p} \norm{\Delta f + n f}_{L^p(\esse^n)}
\end{equation}
\end{prop}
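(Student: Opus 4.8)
The plan is to view the operator $f \mapsto \Delta f + nf$ on the sphere as an elliptic operator whose kernel is exactly the span of the coordinate functions $\{z_i\}_{i=1}^{n+1}$, i.e.\ the first nonzero eigenspace of $-\Delta$ (eigenvalue $n$). The quantity $(v_f, \cdot)$ is, up to a normalising constant, precisely the $L^2$-projection of $f$ onto this kernel: since $\int_{\esse^n} z_i z_j \, dV_\sigma = \frac{1}{n+1}\Vol_n(\esse^n)\,\delta_{ij}$, the function $g_f := f - (v_f,\cdot)$ is $L^2_\sigma$-orthogonal to every coordinate function and to the constants is \emph{not} automatic, so I would either also subtract the mean $\fint f$, or note that the estimate is insensitive to constants since $\Delta f + nf$ applied to a constant $c$ gives $nc \neq 0$ — actually the cleanest route is: the statement is invariant under adding elements of $\ker(\Delta + n)$ to $f$ on the left-hand side only up to the right-hand side seeing nothing, so WLOG we work with $g_f$ which is orthogonal to the kernel. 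The key point is the a priori estimate $\|g_f\|_{W^{2,p}} \lesssim_{n,p} \|(\Delta+n)g_f\|_{L^p} = \|(\Delta+n)f\|_{L^p}$, valid precisely because $g_f$ is orthogonal to the kernel.

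The key steps, in order: (1) Decompose $f = (v_f,\cdot)\cdot\text{const} + g_f$ and verify $g_f \perp \{z_i\}$ in $L^2_\sigma$; record that $(\Delta+n)f = (\Delta+n)g_f$ since coordinate functions lie in the kernel. (2) Establish the a priori estimate: for $u \in C^\infty(\esse^n)$ with $u \perp \ker(\Delta+n)$ in $L^2_\sigma$, one has $\|u\|_{W^{2,p}_\sigma} \lesssim_{n,p} \|\Delta u + nu\|_{L^p_\sigma}$. This is standard elliptic theory: the operator $\Delta + n$ is Fredholm of index zero on $W^{2,p}$, the Calderón–Zygmund / Schauder-type $L^p$ estimate gives $\|u\|_{W^{2,p}} \lesssim \|(\Delta+n)u\|_{L^p} + \|u\|_{L^p}$, and the lower-order term is absorbed by a compactness-contradiction argument using that $(\Delta+n)$ restricted to the $L^2$-orthogonal complement of its kernel is injective (and the $W^{2,p}\hookrightarrow L^p$ embedding is compact). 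Alternatively, decompose into spherical harmonics: on the $k$-th eigenspace ($k \neq 1$) the operator $\Delta + n$ acts as multiplication by $n - k(k+n-1) \neq 0$, which is bounded away from zero for $k \geq 2$; combined with the $L^p$ regularity this yields the bound directly, though the harmonic-decomposition argument needs care at $p \neq 2$ and I would rather run the soft compactness argument. (3) Apply the a priori estimate to $u = g_f$ and conclude $\|f - (v_f,\cdot)\|_{W^{2,p}_\sigma} = \|g_f\|_{W^{2,p}_\sigma} \lesssim_{n,p} \|\Delta f + nf\|_{L^p_\sigma}$.

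The main obstacle is step (2): one must be careful that the constant functions do \emph{not} lie in $\ker(\Delta+n)$ (they don't, since $(\Delta+n)1 = n$), so orthogonality to the coordinate functions alone is the right and sufficient condition, and the Fredholm alternative is being applied on the correct complement. A secondary subtlety is the precise normalisation of $v_f$: one should check that with $v_f = \fint_{\esse^n}(n+1) z f(z)\, dV_\sigma(z)$ the function $(v_f, x) = (n+1)\fint f(z)(z,x)\,dV_\sigma(z)$ indeed equals the orthogonal projection $\sum_i \big(\fint f z_i \,dV_\sigma\big) z_i \big/ \fint z_i^2\, dV_\sigma$, which it does because $\fint z_i^2\,dV_\sigma = \frac{1}{n+1}$. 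Once this bookkeeping is settled the proof is essentially a citation of classical linear elliptic theory on $\esse^n$.
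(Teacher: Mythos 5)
Your proof is correct, and it takes a somewhat different (and more direct) route than the paper's. The paper first invokes Obata's identification of $\ker(\Delta+n)$ and asserts the equivalence $\norm{\Delta f + nf}_{L^p_\sigma} \approx_{n,p} \inf_{v}\norm{f-(v,\cdot)}_{W^{2,p}_\sigma}$ (written there as an ``equality''; the nontrivial direction is precisely your coercivity estimate on the orthogonal complement of the kernel, which the paper leaves implicit), and then spends most of its effort showing that the particular choice $v_f$ is comparable to the minimizer $v_m$: it introduces the equivalent seminorm $\abs{f}_{2,p}^p = \norm{f}_p^p + \norm{\nabla^2 f}_p^p$ and bounds $\abs{v_f - v}$ by $\abs{f-(v,\cdot)}_{2,p}$ via integration by parts against $\Delta x_i = -n x_i$. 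You short-circuit that comparison by observing that $(v_f,\cdot)$ is exactly the $L^2_\sigma$-orthogonal projection of $f$ onto $\ker(\Delta+n)$, so $g_f$ lies in the orthogonal complement and the Calder\'on--Zygmund plus compactness-contradiction a priori estimate applies to it directly. What your route buys is economy (no seminorm bookkeeping, no $v_f$-versus-$v_m$ comparison); what the paper's route buys is that the elliptic input is isolated into a single cited equivalence. Two small points to tidy: the mid-paragraph hesitation about orthogonality to constants is a red herring, as you yourself conclude --- constants do not lie in $\ker(\Delta+n)$, so orthogonality to the coordinate functions alone is the correct hypothesis for the Fredholm alternative; and in the compactness step you should say a word about why the $W^{2,p}$-kernel coincides with the smooth kernel (elliptic regularity) and why orthogonality to the $z_i$ passes to the $L^p$-limit (the $z_i$ are bounded on a finite-measure space). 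Neither is a gap.
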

\begin{proof}
Proposition \ref{ObataLike} completes the theorem. 
Firstly we recall the main ingredient of this part, the Obata theorem (see \cite{Obata}): 
\begin{teo}\label{Obata}
Let $(M, \, g)$ be a closed manifold which satisfies the following condition on the Ricci tensor $\Ric$:
\begin{equation}
\Ric(X, \, X) \ge (n-1) \, g(X, \, X) \mbox{ for every vector field } X
\end{equation}
and the Laplacian condition:
\begin{equation}
-\Delta_g f = n  f \mbox{ for some } f
\end{equation}
Then $M$ is isometric to the round sphere $(\esse^n, \, \sigma)$ and we also have
\begin{equation}
\ker - \Delta_\sigma - n = \set{ \phi_v  \mid \phi_v(x) = (v, \, x),\ v\in \erre^{n+1}}
\end{equation}
\end{teo}
Obata's result give us the equality
\begin{align*}
\norm*{ \Delta + n f}_{L^p_\sigma(\esse^n)} = \inf_{ \, v \in \erre^{n+1}} \norm{f  - (v, \, \cdot)}_{W^{2, \, p}_\sigma(\esse^n)}
\end{align*}
What remains is to prove the following estimate:
\begin{equation}\label{LastEstimate}
\norm{f - (v_f, \,  \cdot)}_{W^{2, \, p}_\sigma(\esse^n)} \approxle_{n, \, p} 
\inf_{v \in \erre^{n+1}} \norm{f  -  (v, \,  \cdot)}_{W^{2, \, p}_\sigma(\esse^n)}
\end{equation}
In order to achieve this result, we define the alternative Sobolev norm
\begin{equation}
\abs{f}_{W^{2, \, p}_\sigma(\esse^n)}^p := \norm{f}_{L^p_\sigma(\esse^n)} + \norm{\nabla^2 f}_{L^p_\sigma(\esse^n)}
\end{equation}
The alternative Sobolev norm is equivalent to the standard Sobolev norm. Moreover, the equivalence constants depend only on the geometry of the sphere, hence on $n$ and $p$. 
Due to the equivalence of the two norms, we have 
\[
\inf_{v \in \erre^{n+1}} \abs*{f - (v, \, \cdot)}_{W^{2, \, p}_\sigma(\esse^n)} \approxle_{n, \, p}  \inf_{v \in \erre^{n+1}} \norm*{f - (v, \, \cdot)}_{W^{2, \, p}_\sigma(\esse^n)}
\]
therefore, we can work with the alternative Sobolev norm without loss of generality.
We notice that for any $v$ we have the following inequality:
\[
 \abs*{f - (v_f, \, \cdot)}_{W^{2, \, p}_\sigma(\esse^n)} 
 \le  \abs*{f - (v, \, \cdot)}_{W^{2, \, p}_\sigma(\esse^n)} +  \abs*{  ( v_f - v , \, \cdot  )}_{W^{2, \, p}_\sigma(\esse^n)} 
\]
We notice that if $v$ has unit norm, then 
\[
\abs{(v, \, \cdot)}_{2, \, p}^p = (n+1) \norm{(v, \, \cdot)}_{2, \, p} = (n+1) \norm{(e_1, \, \cdot)}_{2, \, p} = c_{n, \, p}
\]
and this gives us the estimate 
\begin{equation}\label{Lastlast}
 \abs*{f - (v_f, \, \cdot)}_{W^{2, \, p}_\sigma(\esse^n)}  \approxle_{n, \, p} \abs*{f - (v, \, \cdot)}_{W^{2, \, p}_\sigma(\esse^n)} +  \abs*{ v_f - v }  \mbox{ for any } v
\end{equation}
What remains is to study $\abs{v_f - v_m}$, where $v_m$ is the minimum of the function $v \longmapsto \abs{f - (v, \, \cdot)}_{2, \, p}$. Given any $v$, we have
\begin{align*}
\abs{v_{f, \, i} - v_i}
&= 
\abs*{ (n+1)\fint_{\esse^n} x_i f \, dV_\sigma - v_i} \\
&= \frac{1}{\Vol_n(\esse^n)} \abs*{ (n+1) \int_{\esse^n} x_i f \, dV_\sigma - \Vol_n(\esse^n) \, v_i}  \\
&= \frac{1}{\Vol_n(\esse^n)} \abs*{ (n+1) \int_{\esse^n} x_i f \, dV_\sigma - (n+1)\coup*{\int_{\esse^n} \abs{x_i}^2 \, dV_\sigma} \, v_i}   \\
&= \frac{1}{\Vol_n(\esse^n)} \abs*{ (n+1) \int_{\esse^n} x_i f \, dV_\sigma - (n+1) \int_{\esse^n} (v, \, x) x_i \, dV_\sigma } \\
&= \abs*{ (n+1) \fint_{\esse^n} x_i f \, dV_\sigma - (n+1) \fint_{\esse^n} (v, \, x) x_i \, dV_\sigma }
\end{align*}
where we have used the simple equality
\[
\int_{\esse^n} \abs{x_i}^2 dV_\sigma = \frac{\Vol_n(\esse^n)}{n+1} \mbox{ for every } i= 1 \dots n+1
\]
Recalling that for every $v \in \erre^{n+1}$ we have $\Delta (v, \, \cdot) + n (v, \, \cdot) = 0$, we find 
\begin{align*}
\abs{v_{f, \, i} - v_i}
&= \abs*{ (n+1) \fint_{\esse^n} x_i f \, dV_\sigma - (n+1) \fint_{\esse^n} (v, \, x) x_i \, dV_\sigma } \\
&= \abs*{ (n+1) \fint_{\esse^n} x_i \coup*{ f - (v, \, \cdot)} \, dV_\sigma } \\
&=  \abs*{ \fint_{\esse^n} x_i \coup*{ f - (v, \, \cdot)} \, dV_\sigma  - \fint_{\esse^n} \Delta x_i \coup*{ f - (v, \, \cdot)} \, dV_\sigma } \\
&=  \abs*{ \fint_{\esse^n} x_i \coup*{ f - (v, \, \cdot)} \, dV_\sigma  - \fint_{\esse^n} x_i \Delta \coup*{ f - (v, \, \cdot)} \, dV_\sigma } \\
&\le  \fint_{\esse^n} \abs*{{ f - (v, \, \cdot)}} \, dV_\sigma  - \fint_{\esse^n} \abs*{\Delta \coup*{ f - (v, \, \cdot)}} \, dV_\sigma \\ 
&\approxle_{n, \, p} \norm{f - (v, \, \cdot)}_p + \norm{\Delta \coup{f - (v, \, \cdot)}}_p \\
&\le  \norm{f - (v, \, \cdot)}_p + \norm{\nabla^2 \coup{f - (v, \, \cdot)}}_p = \abs*{f - (v, \, \cdot)}_{2, \, p}
\end{align*}
This estimate works for every $v$, hence we find 
\[
\abs{v_f - v_m}_{2, \, p} \approxle_{n, \, p} \, \abs{f - (v_m, \, \cdot)}_{2, \, p}
\]
We can improve \eqref{Lastlast}, obtaining 
\begin{align*}
 \abs*{f - (v_f, \, \cdot)}_{W^{2, \, p}_\sigma(\esse^n)} 
 &\le  \abs*{f - (v_m, \, \cdot)}_{W^{2, \, p}_\sigma(\esse^n)} +  \abs*{  ( v_f - v_m , \, \cdot  )}_{W^{2, \, p}_\sigma(\esse^n)}\\
 &\approxle_{n, \, p} \abs*{f - (v_m, \, \cdot)}_{W^{2, \, p}_\sigma(\esse^n)} +  \abs*{ v_f - v_m } \\
 &\approxle_{n, \, p} \abs*{f - (v_m, \, \cdot)}_{W^{2, \, p}_\sigma(\esse^n)} \\
 &= \min_{v \in \erre^{n+1}} \abs{f - (v, \, \cdot)}_{W^{2, \, p}_\sigma(\esse^n)}  \\
 &\approxle_{n, \, p} \min_{v \in \erre^{n+1}} \norm{f - (v, \, \cdot)}_{W^{2, \, p}_\sigma(\esse^n)}
\end{align*}
Now we recall that the two Sobolev norms are equivalent, so we obtain 
\[
 \norm*{f - (v_f, \, \cdot)}_{W^{2, \, p}_\sigma(\esse^n)}  \approxle_{n, \, p}  \abs*{f - (v_f, \, \cdot)}_{W^{2, \, p}_\sigma(\esse^n)} 
\]
and the thesis follows.
\end{proof}
\subsection{Proof of the computational lemmas}
We end the article reporting the proof of lemmas \ref{Computations} and \ref{FirstMean}.
\begin{proof}[Proof of lemma \ref{Computations}]
Firstly, we compute the differential of $\psi$:
\begin{equation}\label{Differential}
\restr{d \psi}{x} \daA{T_x \esse^n}{T_{\psi(x)} \Sigma},\ \restr{d \psi}{x}[z] = e^{f (x)} \coup*{z +  \nabla_z f \, x}
\end{equation}
In order to compute the expression for $g$ in $\esse^n$, we fix $x$ in $\esse^n$ and use the usual polar coordinates $\set{\dde{\theta^1} \dots \dde{\theta^n}}$ for the sphere.  
We find
\begin{align*}
g &= g_{ij} d\theta^i \, d\theta^j = \psi^* \restr{\delta}{\Sigma} \coup*{ \dde{\theta^i}, \, \dde{\theta^j}} d\theta^i \, d\theta^j \\
 &= e^{2f} \coup*{ \dde{\theta^i} + \nabla_i f \, x, \, \dde{\theta^j} + \nabla_j f \, x} d\theta^i \, d\theta^j \\
 &= e^{2f} \coup*{ \sigma_{ij} + \nabla_i f \, \nabla_j f } d\theta^i \, d\theta^j
\end{align*}
The expression for $g^{-1}$ follows by direct computation.  \\
Now we compute the normal $\ni = \ni_\Sigma$. Fix $x \in \esse^n$ and consider the system $\set{ \dde{\theta^1} \dots  \dde{\theta^n}, x }$ which is orthogonal in $\erre^{n+1}$. By the definition of $\ni$ we have the relation $\coup*{\ni(x), \, \restr{d\psi}{x}}[z] = 0$ for every $z \in <x>^\perp$. Now we write $\ni = \ni^j \, \dde{\theta^j} + \ni^x \, x$ and obtain 
\[
\abs*{\dde{\theta^j}}^2 \ni_j + \nabla_j f \, \ni^x = 0 \mbox{ for every } j
\]
Normalizing we have
\[
\ni(x) = \frac{1}{\sqrt{1 + \abs{\nabla f}^2}} \coup*{x - \grad_\sigma f(x)} 
\]
which is exactly \eqref{norm}. \\
The expression for $A$ is more complex to compute. Firstly, we easily compute the differential of $\ni$:
\begin{align*}
d\ni \cquad*{\dde{\theta^j}} 
&= \nabla_j \coup*{\frac{1}{\sqrt{1 + \abs{\nabla f}^2}}} \coup{x - \nabla f(x)}  +
\\
&+ \frac{1}{\sqrt{1 + \abs{\nabla f}^2}} \coup*{\dde{\theta^j} - \nabla_j \coup{\nabla f}}
\end{align*}
and now we can make our computation
\begin{align*}
A_{ij} 
:&= \coup*{d \psi \cquad*{\dde{\theta^i}}, \, d\ni\cquad*{\dde{\theta^j}}  } = \\
&= \frac{e^f}{\sqrt{1 + \abs{\nabla f}^2}} \coup*{\dde{\theta^i} + \nabla_i f, \, \dde{\theta^j} - \nabla_j \nabla f}
\\
&=  \frac{e^f}{\sqrt{1 + \abs{\nabla f}^2}} \coup*{\sigma_{ij} - \nabla_i f \, (\nabla_j \nabla f, \, \underbrace{x}_{\ni_{\esse^n}}) - \coup*{\nabla_j \nabla f, \, \dde{\theta^i}} }
\end{align*}
We compute $\nabla_j \nabla f$ in the orthogonal system $\set{\dde{\theta^1} \dots \dde{\theta^n}}$. 
\begin{align*}
(\nabla_j \nabla f, \, x) &= \nabla_j \underbrace{(\nabla f, \, x)}_{= 0} - (\nabla f, \, \nabla_j \ni_{\esse^n} ) = - A_{\esse^n}\coup*{\nabla f, \,  \dde{\theta^j}} = - \nabla_j f \\
\coup*{ \nabla_j \nabla f, \, \dde{\theta^i} } 
&= \nabla_j \underbrace{\coup*{ \nabla f, \, \dde{\theta^i} }}_{ = \partial_i f} - \coup*{\nabla f, \, \nabla_i \dde{\theta^j}} = \partial^2_{ij} f - \Gamma^k_{ij} \partial_k f = \nabla^2_{ij} f  
\end{align*}
We finally write
\[
A_{ij} = \frac{e^f}{\sqrt{1 + \abs {\nabla f}^2}} \coup*{ \sigma_{ij} + \nabla_i f \nabla_j f - \nabla^2_{ij} f}
\]
which is exactly (\ref{Aff}), and we are done. Equality \eqref{Aalta} follows by a direct computation by writing $A^i_j = g^{li} A_{lj}$ and we do not report it. \\ 
Formula \eqref{Volume} follows from the area formula (see \cite{AFP}): 
\[
\int_\Sigma h(y) \, dV_g(y) = \int_{\esse^n} h(\psi(x)) J_{d\psi}(x) \, dV_\sigma \mbox{ for any } h \in C(\Sigma)
\] 
with 
\[
J_{d\psi}(x)^2 = \det \restr{d^*\psi}{x} \circ \restr{d\psi}{x}
\]
where $d^* \psi$ is the adjoint differential, whose representative matrix is simply the transpose of the $d \psi$ representative matrix. Taking $\set{\dde{\theta^1} \dots \dde{\theta^n}, \, x}$ as frame for $\erre^{n+1}$ we easily find the expression
\[
\det \restr{d^*\psi}{x} \circ \restr{d\psi}{x} = e^{2 n f} \coup*{1 + \abs{\nabla f}^2}
\]
and the result follows simply by taking the square root. 

Lastly we deal with the Christoffel symbols. We recall the formula 
\[
\Gammag_{ij}^k = \frac{1}{2}g^{ks} \coup*{ \partial_i g_{js} + \partial_j g_{is} - \partial_s g_{ij}}
\]
and now we expand it:
\begin{align*}
\Gammag_{ij}^k 
&= \frac{1}{2} \coup*{ \sigma^{ks} - \frac{\nabla^k f \nabla^s f}{1 + \abs{\nabla f}^2} } \coup*{ \partial_i \sigma_{js} + \partial_j \sigma_{is} - \partial_s \sigma_{ij} } + \\
& + \frac{1}{2} \coup*{ \sigma^{ks} - \frac{\nabla^k f \nabla^s f}{1 + \abs{\nabla f}^2} } \coup*{ \partial_i \coup{\partial_j f \partial_s f} + \partial_j \coup{\partial_i f \partial_s f} - \partial_s \coup{\partial_i f \partial_j f} } + \\
& + \frac{1}{2} g^{ks} \coup*{ \nabla_i f \, g_{js} + \nabla_j f \, g_{is} - \nabla_s f \, g_{ij} } \\
&= \frac{1}{2} \coup*{ \sigma^{ks} - \frac{\nabla^k f \nabla^s f}{1 + \abs{\nabla f}^2} } \coup*{ \partial_i \sigma_{js} + \partial_j \sigma_{is} - \partial_s \sigma_{ij} + 2 \, \partial^2_{ij} f \, \partial_s f }+ \\
& + \frac{1}{2} \coup*{ \nabla_i f \, \delta_j^k + \nabla_j f \, \delta_i^k - \nablag_k f \, g_{ij} } \\
&= \Gamma_{ij}^k + \partial^2_{ij} f \, \partial^k f - \frac{\abs{\nabla f}^2}{1 + \abs{\nabla f}^2} \partial^2_{ij} f \, \partial^k f + \\
&- \frac{1}{1 + \abs{\nabla f}^2}  \partial^k f  \, \partial_s f \, \frac{\sigma^{ls}}{2} \coup*{ \partial_i \sigma_{js} + \partial_j \sigma_{is} - \partial_s \sigma_{ij} }+ \\ 
& + \frac{1}{2} \coup*{ \nabla_i f \, \delta_j^k + \nabla_j f \, \delta_i^k - \nablag_k f \, g_{ij} } \\
&= \Gamma_{ij}^k + \frac{1}{1 + \abs{\nabla f}^2}  \coup*{\partial^2_{ij} f - \partial_s f \, \Gamma_{ij}^s} \partial^k f + \\
& + \frac{1}{2} \coup*{ \nabla_i f \, \delta_j^k + \nabla_j f \, \delta_i^k - \nablag_k f \, g_{ij} } \\
&= \Gamma_{ij}^k + \frac{1}{1 + \abs{\nabla f}^2} \nabla^2_{ij} f \nabla^k f + \frac{1}{2} \coup*{ \nabla_i f \, \delta_j^k + \nabla_j f \, \delta_i^k - \nablag_k f \, g_{ij} }
\end{align*}
\end{proof}

Lastly, we prove corollary \ref{FirstMean}.
\begin{proof}[Proof of corollary \ref{FirstMean}]
Let $\lambda_0 \in \erre$ such that 
\[
\norm{A - \lambda_0 \, g}_p = \min_{\lambda \in \erre} \norm{A - \lambda \, g}
\]
We simply write
\begin{align*}
\norm{A - \overline{H} \, g}_p 
&= \norm{ (A - \lambda_0 \, g) + (\lambda_0 - \overline{H}) \, g}_p \\
&\le \norm{A - \lambda_0 \, g}_p + n \, \Vol_n(\esse^n)^{ \frac{1}{p}} \abs{\overline{H} - \lambda_0}
\end{align*}
The thesis follows estimating the last term.
\begin{align*}
\abs{\overline{H} - \lambda_0} 
&= \abs{ \fint_{\esse^n} \frac{1}{n} \tr_g A \, dV_g - \lambda_0 } =  \frac{1}{n} \abs{ \fint g^{ij} \, \coup*{ A_{ij} - \lambda_0 \, g_{ij} } \, dV_g } \\
&= \frac{1}{n \Vol_n(\esse^n)}  \abs{ \int \coup*{g, \, A - \lambda_0 \, g} \, dV_g } 
\le  \frac{1}{n \Vol_n(\esse^n)} \norm{g}_{p'} \norm{A - \lambda_0 \, g}_p  \\
& = \Vol_n(\esse^n)^{-\frac{1}{p}}\norm{A - \lambda_0 \, g}_p
\end{align*}
We have obtained
\[
\norm{A - \overline{H} \, g}_p \le (n+1) \, \min_{\lambda \in \erre} \norm{A - \lambda \, g}_p
\]
which is exactly the thesis.
\end{proof}
\section*{Acknowledgements}
The author is thankful to Prof. Camillo De Lellis who brought this problem to his attentions and gave him the main idea for the proof. He also wish to thanks Alessandro Pigati and Elia Brué for their valuable suggestions and comments on this article.

\bibliographystyle{plain}
\bibliography{Problem}
\end{document}